\DeclareSymbolFont{cyrletters}{OT2}{wncyr}{m}{n}
\numberwithin{equation}{section} \numberwithin{figure}{section}
\DeclareMathOperator{\Pic}{Pic} 
\DeclareMathOperator{\Gal}{Gal} 
\DeclareMathOperator{\Spec}{Spec}
\DeclareMathOperator{\Hom}{Hom} 
\DeclareMathOperator{\im}{Im}
\DeclareMathOperator{\Br}{Br} 
\DeclareMathOperator{\inv}{inv}
\DeclareMathOperator{\HH}{H} 
\newcommand{\R}{\mathrm{R}} 
\let\div\relax
\DeclareMathOperator{\div}{div}
\DeclareMathOperator{\diag}{diag}
\DeclareSymbolFont{cyrletters}{OT2}{wncyr}{m}{n}
\DeclareMathSymbol{\Sha}{\mathalpha}{cyrletters}{"58}
\DeclareMathSymbol{\Be}{\mathalpha}{cyrletters}{"42}
\newcommand{\mmu}{\boldsymbol{\mu}}
\newcommand{\OO}{\mathcal{O}}
\newcommand\FF{\mathbb{F}}
\newcommand\PP{\mathbb{P}}
\renewcommand\AA{\mathbb{A}}
\newcommand\ZZ{\mathbb{Z}}
\newcommand\NN{\mathbb{N}}
\newcommand\QQ{\mathbb{Q}}
\newcommand\RR{\mathbb{R}}
\newcommand\CC{\mathbb{C}}
\newcommand\GG{\mathbb{G}}
\newcommand\Gm{\GG_\mathrm{m}}
\newcommand{\Adele}{\mathbf{A}}
\newcommand{\sU}{\mathcal{U}}
\newcommand{\bu}{{\bf u}}
\newtheorem{lemma}{Lemma}
\newtheorem{theorem}[lemma]{Theorem}
\newtheorem{proposition}[lemma]{Proposition}
\newtheorem{corollary}[lemma]{Corollary}
\theoremstyle{definition}
\newtheorem{remark}[lemma]{Remark}
\numberwithin{lemma}{section}
\newcommand{\dan}[1]{{\color{blue} \sf $\clubsuit\clubsuit\clubsuit$ Dan: [#1]}}
\newcommand{\martin}[1]{{\color{ForestGreen} \sf $\spadesuit\spadesuit\spadesuit$ Martin: [#1]}}
\begin{document}
\title{Brauer--Manin obstruction for Erd\H{o}s--Straus surfaces}
\author{Martin Bright}
\address{Mathematisch Instituut \\ Niels Bohrweg 1 \\ 2333 CA Leiden \\ Netherlands}
\email{m.j.bright@math.leidenuniv.nl}

\author{Daniel Loughran}
  \address{
	Department of Mathematical Sciences\\
	University of Bath\\
	Claverton Down\\
	Bath\\
	BA2 7AY\\
	UK}
\urladdr{https://sites.google.com/site/danielloughran/}

\subjclass[2010]
{14G05 (primary), 
11D68, 
11D25, 
14F22 
(secondary)}

\begin{abstract}
	We study the failure of the integral Hasse principle and strong approximation for the Erd\H{o}s--Straus conjecture using the Brauer--Manin 
	obstruction.
\end{abstract}

\maketitle
\thispagestyle{empty}
\tableofcontents

\section{Introduction}

\subsection{The Erd\H{o}s--Straus conjecture}

The Erd\H{o}s--Straus conjecture states that for every $n \geq 2$
the equation
\begin{equation} \label{eqn:ES}
	\frac{4}{n} = \frac{1}{u_1} + \frac{1}{u_2} + \frac{1}{u_3}
\end{equation}
always has a solution with $u_1,u_2,u_3 \in \NN$. Note that there is always
a solution with $u_1,u_2,u_3 \in \ZZ$ \cite{Jar04}, and to prove the conjecture it suffices to consider the case where $n$ is a prime. 
Moreover for any fixed $n$, it is straightforward to see that there can be only finitely many solutions, and that they may be easily enumerated (see Lemma~\ref{lem:Zariski}).
We refer to Mordell's book \cite[Ch.~30]{Mor69} and the more recent paper \cite{ET13} for further background and history on this problem. 

In this paper we investigate what modern techniques from arithmetic geometry can say about this conjecture and more generally the structure of the solutions to \eqref{eqn:ES}. 
At a first glance, it is not clear how to use tools from modern algebraic geometry to tackle the problem, as $\NN$ is not a ring. 
However, this conjecture does indeed have a natural interpretation as a question of \emph{strong approximation}, stipulating that integer solutions with certain real conditions exist.
Our first main result states that there is no Brauer--Manin obstruction in this case (see \S \ref{sec:geometry_intro} for a more precise statement and background on the Brauer--Manin obstruction).

\begin{theorem}	 \label{thm:No_Br}
	Let $n \geq 2$. Then there is no Brauer--Manin obstruction
	to the existence of natural number solutions of the equation \eqref{eqn:ES}.
\end{theorem}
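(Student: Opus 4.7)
My plan has three stages: identify the geometry of $U$, describe the Brauer group and its pairing on adelic points, and construct an adelic integer point annihilating the Brauer pairing. First, the birational substitution $v_i := 1/u_i$ identifies $U \subset \AA^3_\QQ$ (the locus $u_1u_2u_3 \neq 0$ on the surface cut out by \eqref{eqn:ES}) with the complement of the three coordinate lines in the affine plane $\{v_1 + v_2 + v_3 = 4/n\} \cong \AA^2_\QQ$. In particular $U$ is $\QQ$-rational, $\Pic(U_{\bar\QQ}) = 0$, and the unit group $\bar\QQ[U]^\times/\bar\QQ^\times \cong \ZZ^3$, freely generated by $v_1, v_2, v_3$, carries trivial Galois action. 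Via Hochschild--Serre, the algebraic Brauer group modulo constants embeds into $\Hom(\Gal_\QQ, \QQ/\ZZ)^3$, with classes represented by sums of cyclic algebras $(v_i, \chi_i)$; the transcendental Brauer group is generated by Hilbert-type symbols $(v_i, v_j)_m$.

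For an adelic integer point, each coordinate yields a finite idèle $(u_{i,v})_{v<\infty}$ with $u_{i,v} \in \ZZ_v$, together with a positive real component. Since the class group of $\ZZ$ is trivial, this idèle decomposes as $a_i \cdot (\epsilon_{i,v})$ with $a_i \in \QQ^\times$ and $\epsilon_{i,v} \in \widehat{\ZZ}^\times$. The principal contribution $a_i$ sums to zero by global reciprocity (Artin for cyclic algebras, Hilbert for Hilbert symbols), so the total Brauer pairing reduces to a finite sum involving the residues $\epsilon_{i,v}$ at the ramified places of the Brauer classes. I would then construct an adelic integer point with chosen residues at the bad primes (in particular $p = 2$ and $p \mid n$) so that these ramified contributions cancel, exploiting the flexibility at good primes to adjust.

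The main obstacle will be the final construction: one must establish local solvability at every prime and align the local residues so that all Brauer classes simultaneously pair trivially. This requires a careful Hensel-type analysis at $p = 2$ and at primes $p \mid n$, together with an argument that local freedom at auxiliary good primes suffices to cancel the remaining obstruction at bad primes. These conditions are expected to be mild for the Erdős--Straus equation, though the arithmetic at $p = 2$ may prove particularly delicate given the coefficient $4$ in \eqref{eqn:ES}.
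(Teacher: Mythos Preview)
Your Brauer group is computed on the wrong variety. You pass to the open locus $\{u_1u_2u_3\neq 0\}$, which via $v_i=1/u_i$ is the complement of three lines in $\AA^2$, but the Brauer--Manin obstruction in question is formulated with $\Br U_n$ for the \emph{full} affine cubic $U_n$ of~\eqref{def:U_n}. Your open has a vastly larger Brauer group: the algebraic classes $(v_i,\chi)$ you list all ramify along the lines $L_{j,k}\colon u_j=u_k=0$ inside $U_n$ and hence do not lie in $\Br U_n$, and likewise most of the symbols $(v_i,v_j)_m$ ramify there. The paper shows (Theorem~\ref{thm:Brauer}) that in fact $\Br U_n/\Br\QQ\cong\ZZ/2\ZZ$, generated by the single quaternion algebra $\alpha=(-u_1/u_3,-u_2/u_3)$. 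Establishing this requires a purity argument on the desingularisation $\widetilde U_n$ together with a separate result (Theorem~\ref{thm:rational}) that $\Br U_n\to\Br\widetilde U_n$ is surjective for rational surface singularities. You have bypassed this step entirely, and without it your third stage is aimed at annihilating an infinite family of classes that play no role in the actual obstruction; carried out literally, that verification could not terminate.

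Once one knows there is only the single class $\alpha$, the endgame is far simpler than your id\`ele-splitting manoeuvre. The paper computes the local invariant of $\alpha$ directly at each place: it is $-1$ on $U_n(\RR)_+$, identically $1$ on $\sU_n(\ZZ_p)$ for every $p\nmid 2n$ (and also at $p=2$ when $n$ is odd), and \emph{surjective} onto $\{\pm1\}$ on $\sU_n(\ZZ_p)$ for every odd prime $p\mid n$ (and at $2$ when $n$ is even). Surjectivity at one bad prime is all that is needed: choose the local component there so that the product of invariants is $1$, and you have a point of the Brauer--Manin set. Your anticipated delicacy at $p=2$ is genuine---the paper handles it by a modest case analysis modulo $8$---but the global-reciprocity decomposition of the $u_i$ into principal times unit id\`eles is neither needed nor well-suited once the Brauer group is known to be so small.
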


Despite there being no Brauer--Manin obstruction to the conjecture, it turns out that there is in fact an obstruction to strong approximation at the $p$-adic places. This obstruction has the following completely explicit description. (In the statement $(\cdot, \cdot)_p$ denotes the Hilbert symbol.)

\begin{theorem} \label{thm:Hilbert}
	Let $n \in \NN$ be odd  and $\bu \in \NN^3$ a solution to 
	\eqref{eqn:ES}. Then
	\[ \prod_{p \mid n} (-u_1/u_3, -u_2/u_3)_p = -1. \]
\end{theorem}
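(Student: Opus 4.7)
The plan is to apply Hilbert reciprocity $\prod_v (-u_1/u_3,-u_2/u_3)_v = 1$ to $-u_1/u_3, -u_2/u_3 \in \QQ^*$. At the archimedean place, both arguments are negative (since $u_i \in \NN$), so $(-u_1/u_3,-u_2/u_3)_\infty = -1$. It therefore suffices to prove that for every finite prime $p \nmid n$ one has $(-u_1/u_3,-u_2/u_3)_p = 1$; granted this, reciprocity forces $\prod_{p \mid n}(-u_1/u_3,-u_2/u_3)_p = -1$.

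For the finite-place computation, I would work with the equivalent expression $(-u_1 u_3, -u_2 u_3)_p$ (which differs by a square factor), write $u_i = p^{v_i}\tilde{u}_i$ with $\tilde{u}_i \in \ZZ_p^*$, and apply the standard formula for the Hilbert symbol at an odd prime in terms of the $v_i$ and the Legendre symbols $\left(\tfrac{\tilde u_i}{p}\right)$. To constrain the possible triples $(v_1,v_2,v_3)$ I would use the Erd\H{o}s--Straus relation $4 u_1 u_2 u_3 = n s_2$, where $s_2 := u_1 u_2 + u_1 u_3 + u_2 u_3$. For $p \nmid 4n$ this implies $v_1 + v_2 + v_3 = v_p(s_2)$, which together with the elementary estimate $v_p(s_2) \geq \min_{i\neq j}(v_i + v_j)$ and a higher-order cancellation analysis excludes most configurations, leaving only (up to permutation): either (i) $v_1 = v_2 = v_3$, or (ii) two of the $v_i$ coincide at a common value $v$ and the third is strictly smaller.

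In case (i) both arguments of the Hilbert symbol have even $p$-adic valuation, so after removing a $p$-power square each reduces to a $\ZZ_p$-unit, and the Hilbert symbol of two units at an odd prime is $1$. In case (ii), say $v_k < v_i = v_j = v$, the equation further forces $\tilde{u}_i + \tilde{u}_j \equiv 0 \pmod p$ (so that $v_p(s_2)$ reaches the required value $v_k + 2v$), whence $-\tilde{u}_i \tilde{u}_j \equiv \tilde{u}_i^2 \pmod p$ is a square mod $p$. The direct formula then shows that $(-u_1 u_3, -u_2 u_3)_p$ equals a power of $\left(\tfrac{-\tilde u_i \tilde u_j}{p}\right)$, hence equals $1$.

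The principal obstacle is the prime $p = 2$, which enters because $n$ is odd. At $p = 2$, Hilbert symbols depend on residues modulo $8$ rather than modulo $p$, and the analogue of $(-1,-n)_2$ is genuinely sensitive to $n \bmod 4$. The valuation identity becomes $2 + v_1 + v_2 + v_3 = v_2(s_2)$, and the mod-$p$ cancellation $\tilde u_i + \tilde u_j \equiv 0$ must be promoted to a statement modulo a higher power of $2$. Completing this more delicate $2$-adic analogue of the above case analysis yields $(-u_1/u_3,-u_2/u_3)_2 = 1$, and combined with the odd-prime computation and Hilbert reciprocity, this gives the claimed product formula.
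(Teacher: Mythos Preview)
Your overall strategy coincides with the paper's: apply Hilbert reciprocity, evaluate the real symbol as $-1$, and show the symbol is trivial at every finite prime $p \nmid n$. Your treatment of odd primes $p \nmid n$ is correct; the paper argues slightly differently, first observing (Lemma~\ref{lem:p-adic}) that some ratio $u_i/u_j$ is a $p$-adic unit and then using the identity $-u_i/u_j = (1 + u_j/u_k - 4u_j/n)^{-1}$ from~\eqref{eqn:awesome} to see that the relevant unit is $\equiv 1 \bmod p$, but your direct valuation bookkeeping (ruling out all patterns except $v_1=v_2=v_3$ and $v_k < v_i = v_j$ with $\tilde u_i + \tilde u_j \equiv 0 \bmod p$) reaches the same conclusion just as cleanly.

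The one real deficiency is at $p=2$. You correctly flag this as the principal obstacle and describe its shape (work modulo $8$, the valuation identity acquires an extra $+2$, the cancellation must be promoted to a higher power of $2$), but you do not actually carry out the computation---you simply assert the conclusion. This is the most laborious step in the whole argument: in the paper's proof (Lemma~\ref{lem:2_odd}) one first shows, writing $u_i = 2^{s_i} r_i$ with $r_i$ odd and $s_1 \ge s_2 \ge s_3$, that $s_1 = s_2 > s_3$, and then performs a case analysis on $s_1 - s_3 \in \{1,2,\ge 3\}$ and on the residues $r_i \bmod 8$ (culminating in a six-row table) to verify that the two contributions $\epsilon(-r_1/r_3)\epsilon(-r_2/r_3)$ and $(s_1 - s_3)(\omega(r_1)+\omega(r_2))$ to the $2$-adic Hilbert symbol always sum to $0 \bmod 2$. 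Nothing in your sketch forces this cancellation, so as written the $2$-adic claim is unproven; once you fill in that table, your proof is complete and essentially identical to the paper's.
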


Despite the apparent asymmetry, the given Hilbert symbols are actually invariant under the natural action of the symmetric group $S_3$ on the variables $u_i$ (see Proposition \ref{prop:descend}).
In the stated generality, Theorem \ref{thm:Hilbert} does not seem to have been known and gives new conditions which natural number solutions must satisfy. Theorem \ref{thm:Hilbert} allows one to recover various known results in a more systematic and conceptual way, as special cases of a Brauer--Manin obstruction. For example if $n$ is an odd prime, we have the following.

\begin{corollary} \label{cor:Yamamoto}
	Let $n=p$ be an odd prime and $\bu \in \NN^3$ a solution to 
	\eqref{eqn:ES}.
	Then there exists $i \neq j$ such that $u_i/u_j \in \ZZ_p^*$.
	For such a solution we have
	\[ \left(\frac{-u_i/u_j}{p}\right) = -1, \]
	where the symbol is the Legendre symbol.
\end{corollary}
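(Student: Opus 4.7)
The plan is to derive the corollary directly from Theorem \ref{thm:Hilbert}, which specialized to $n = p$ gives $(-u_1/u_3, -u_2/u_3)_p = -1$. Two things need to be verified: the existence of a pair $(i,j)$ with $u_i/u_j \in \ZZ_p^*$, and the Legendre symbol evaluation for any such pair.

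For existence, I would argue that the three $p$-adic valuations $v_p(u_i)$ cannot all be distinct. If they were, say with $v_p(u_1) < v_p(u_2) < v_p(u_3)$ after relabeling, the summands $1/u_i$ would have strictly distinct $p$-adic valuations $-v_p(u_i)$, so $v_p(1/u_1 + 1/u_2 + 1/u_3) = -v_p(u_3)$. Comparing with $v_p(4/p) = -1$ forces $v_p(u_3) = 1$, leaving no room for two smaller distinct nonnegative integers $v_p(u_1) < v_p(u_2)$. Hence two of the valuations must coincide, supplying the required pair.

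For the Legendre symbol, I would invoke the $S_3$-invariance from Proposition \ref{prop:descend} to reduce without loss of generality to $(i,j) = (1,2)$, and then apply the transposition swapping $u_2$ with $u_3$ to rewrite the identity as $(-u_1/u_2, -u_3/u_2)_p = -1$. Setting $a = v_p(u_1) = v_p(u_2)$ and $b = v_p(u_3)$, the first entry is a $p$-adic unit while the second has valuation $b - a$, so the standard tame Hilbert symbol formula at odd $p$ collapses the left-hand side to $\left(\frac{-u_1/u_2}{p}\right)^{b-a}$. Equality to $-1$ simultaneously forces $b-a$ to be odd and the Legendre symbol itself to equal $-1$, as claimed.

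The argument is essentially $p$-adic bookkeeping applied to the output of Theorem \ref{thm:Hilbert}; the only point requiring care is the use of $S_3$-invariance to place the unit pair at positions $(1,2)$, so that the Hilbert symbol can be evaluated explicitly via its tame formula at an odd prime. No substantive obstacle is expected.
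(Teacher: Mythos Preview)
Your proposal is correct and follows essentially the same approach as the paper. The paper obtains existence from its Lemma~\ref{lem:p-adic} (a general valuation argument for $v_p(n)\le 1$), whereas you give the same valuation argument directly for $n=p$; and for the Legendre symbol, the paper normalises the unit pair to $(2,3)$ and applies the tame Hilbert symbol formula (packaged as Lemma~\ref{lem:bad}) without needing to permute, while you normalise to $(1,2)$ and invoke Proposition~\ref{prop:descend} to permute the symbol before applying the same tame formula --- a cosmetic difference only.
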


Corollary \ref{cor:Yamamoto} unifies various quadratic reciprocity conditions found by Yamamoto \cite{Yam65} for $p \equiv 1 \bmod 4$. We are also able to recover the following result of Elsholtz and Tao (\cite[Prop.~1.6]{ET13}).

\begin{corollary} \label{cor:square}
	If $n$ is an odd square, then there are no natural
	number solutions $\bu$ with
	\[n \mid u_1, \gcd(n,u_2u_3) = 1, \quad \mbox{ or }
	\quad 	\gcd(n,u_1) = 1, n \mid u_2, n \mid u_3. \]
\end{corollary}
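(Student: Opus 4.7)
The plan is to argue by contradiction: assuming $\bu \in \NN^3$ is a solution to \eqref{eqn:ES} satisfying one of the two gcd conditions, I will show that for every prime $p \mid n$ the local Hilbert symbol $(-u_1/u_3, -u_2/u_3)_p$ equals $+1$, so that $\prod_{p \mid n}(-u_1/u_3, -u_2/u_3)_p = +1$, contradicting Theorem \ref{thm:Hilbert}. The main tool is $p$-adic valuation analysis of the cleared equation
\[ 4 u_1 u_2 u_3 = n(u_2 u_3 + u_1 u_3 + u_1 u_2). \]
Fix $p \mid n$ and write $2k = v_p(n) \geq 2$; the hypothesis that $n$ is a square is used precisely here, so that $v_p(n)$ is even.

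In the first scenario ($n \mid u_1$, $\gcd(n, u_2 u_3) = 1$), one has $v_p(u_1) \geq 2k$ and $v_p(u_2) = v_p(u_3) = 0$. The three right-hand terms then have valuations $2k$, $2k + v_p(u_1)$ and $2k + v_p(u_1)$, of which $2k$ is the unique minimum; comparing with $v_p(\mathrm{LHS}) = v_p(u_1)$ forces $v_p(u_1) = 2k$. Both Hilbert symbol entries then have zero or even $p$-adic valuation, and the tame symbol formula at the odd prime $p$ immediately gives $(-u_1/u_3, -u_2/u_3)_p = +1$.

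In the second scenario ($\gcd(n, u_1) = 1$, $n \mid u_2$, $n \mid u_3$), set $b = v_p(u_2)$ and $c = v_p(u_3)$, both $\geq 2k$. A first valuation comparison rules out $b \neq c$: if, say, $b < c$, the unique smallest term on the right is $n u_1 u_2$ with valuation $2k + b$; matching with $v_p(\mathrm{LHS}) = b + c$ gives $c = 2k$, contradicting $c > b \geq 2k$. With $b = c$, comparing $n u_2 u_3$ (valuation $2k + 2b$) against $n u_1(u_2+u_3)$ (valuation $2k + v_p(u_2+u_3)$) yields $v_p(u_2 + u_3) = 2b - 2k$. If $b = 2k$, then $v_p(-u_1/u_3) = -2k$ is even and the tame formula gives $+1$. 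If $b > 2k$, write $u_2 = p^b u_2'$ and $u_3 = p^b u_3'$ with $u_2', u_3' \in \ZZ_p^\times$; the identity $v_p(u_2' + u_3') = b - 2k > 0$ forces $-u_2/u_3 = -u_2'/u_3' \equiv 1 \pmod p$, which by Hensel's lemma is a square in $\QQ_p^\times$, whence the Hilbert symbol is $+1$ regardless of the parity of $b$.

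The main technical obstacle lies in the second scenario's valuation bookkeeping: establishing that $b = c$ and then extracting the precise valuation $v_p(u_2 + u_3) = 2b - 2k$ despite the possibility of non-trivial cancellation. Once these structural facts are in hand, the two sub-cases ($b = 2k$ versus $b > 2k$) cover all possibilities and the Hilbert symbol computation via the tame formula is mechanical.
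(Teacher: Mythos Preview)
Your proof is correct and follows essentially the same strategy as the paper: contradict Theorem~\ref{thm:Hilbert} by showing that, under either divisibility hypothesis, the local Hilbert symbol $(-u_1/u_3,-u_2/u_3)_p$ equals $+1$ for every prime $p\mid n$. The case split (Type~1 versus Type~2) and the handling of each case match the paper's local lemma; the only cosmetic difference is that in the Type~2 sub-case $v_p(u_3)>2k$ the paper reaches $-u_2/u_3\equiv 1\pmod p$ via the algebraic identity $-u_2/u_3 = (1+u_3/u_1-4u_3/n)^{-1}$, whereas you reach the same conclusion by directly extracting $v_p(u_2'+u_3')=b-2k>0$ from the equation.
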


Corollary \ref{cor:square} is really a condition on natural number solutions which is not present for integer solutions (e.g.~for $n=9$ consider the solutions $(-18, 4, 4)$ and $(-9, 2, 18)$).
Similarly, the congruence condition in Corollary \ref{cor:Yamamoto} is also not present for integer solutions in general. For example, consider  $p = 5$ and the solution $(-5,2,2)$, where the corresponding Legendre symbol is
$1$. In fact, for integer solutions which are not natural number solutions,
the exact opposite of Theorem \ref{thm:Hilbert} holds.

\begin{theorem} \label{thm:Hilbert2}
	Let $n$ be an odd integer and $\bu \in \ZZ^3$ a solution to 
	\eqref{eqn:ES} which is not a natural number solution. Then
	\[\prod_{p \mid n} (-u_1/u_3, -u_2/u_3)_p = 1. \]
\end{theorem}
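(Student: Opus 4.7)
The plan is to deduce Theorem~\ref{thm:Hilbert2} from the same local analysis that underlies Theorem~\ref{thm:Hilbert}, with the only new ingredient being a sign computation at the archimedean place. I would set $a = -u_1/u_3$ and $b = -u_2/u_3$ and apply Hilbert reciprocity to obtain
\[
\prod_v (a,b)_v = 1,
\]
the product running over all places $v$ of $\QQ$. The crucial input, which I would import from the proof of Theorem~\ref{thm:Hilbert}, is that for \emph{any} integer solution of \eqref{eqn:ES} the local Hilbert symbol $(a,b)_p$ is trivial at every place outside $\{\infty\} \cup \{p : p \mid n\}$. This is a purely local statement---it expresses that the quaternion Brauer class $(a,b)$ is unramified at those places and so evaluates trivially on integral local points---and it is precisely what pins the Brauer--Manin obstruction of Theorem~\ref{thm:Hilbert} on the primes dividing $n$. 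Granting it, the global identity collapses to
\[
(a,b)_\infty \cdot \prod_{p \mid n}(a,b)_p = 1.
\]

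Next I would evaluate the archimedean factor by inspecting signs. Recall that $(x,y)_\infty = -1$ iff both $x < 0$ and $y < 0$, so $(a,b)_\infty = -1$ precisely when $u_1$ and $u_3$ share a sign \emph{and} $u_2$ and $u_3$ share a sign, i.e.\ when all three $u_i$ have a common sign. For a natural number solution all $u_i > 0$, giving $(a,b)_\infty = -1$ and recovering Theorem~\ref{thm:Hilbert}. In the present setting $\bu \in \ZZ^3$ is not a natural number solution, so at least one $u_i < 0$; but not all three can be negative, since $\sum_i 1/u_i = 4/n > 0$ (one may assume $n > 0$, else negate both $n$ and $\bu$ throughout). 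Hence the $u_i$ do not share a common sign, so $(a,b)_\infty = +1$, and the identity above forces $\prod_{p \mid n}(a,b)_p = +1$, as desired.

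The main obstacle will be the local vanishing $(a,b)_p = 1$ at every $p \notin \{\infty\} \cup \{p : p \mid n\}$. At odd $p \nmid 2nu_1u_2u_3$ this is immediate, since the Hilbert symbol is trivial on pairs of $p$-adic units at odd residue characteristic. The substantive cases are odd $p \nmid n$ dividing some $u_i$, and $p = 2$. A useful reduction is the identity $1/(\lambda a) + 1/(\lambda b) = 1$ with $\lambda = (n - 4u_3)/n$, which follows directly from \eqref{eqn:ES}; combined with the Steinberg relation it yields $(a,b) = (\lambda, -ab)$ in $\Br(\QQ)$, reducing the problem to analysing $(\lambda, -ab)_p$ place by place via the tame symbol at odd $p$ and the explicit $2$-adic formula at $p = 2$. (For instance, at odd $p\nmid n$ with $p \mid u_3$ one has $\lambda \equiv 1 \pmod p$, so $\lambda$ is a $p$-adic square and the symbol vanishes.) This analysis is the technical core of the Brauer--Manin calculation behind Theorem~\ref{thm:Hilbert}; once it is in hand, the present theorem is an immediate consequence of the sign flip at infinity identified above.
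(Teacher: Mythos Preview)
Your proposal is correct and matches the paper's proof essentially line for line: the paper invokes Hilbert reciprocity, cites the local vanishing at all $p\nmid n$ (including $p=2$, via Lemmas~\ref{lem:good} and~\ref{lem:2_odd}) established for Theorem~\ref{thm:Hilbert}, and then appeals to Lemma~\ref{lem:real} to see that the archimedean invariant is $+1$ when not all $u_i$ are positive. Your explicit observation that the $u_i$ cannot all be negative (since $\sum 1/u_i = 4/n > 0$) is exactly what underlies the paper's description of $U_n(\RR)_-$ in Lemma~\ref{lem:real}.
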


\subsection{Geometric interpretation} \label{sec:geometry_intro}
We now explain in more detail how to interpret our results geometrically using the Brauer--Manin obstruction.
Consider the corresponding algebraic surface derived from \eqref{eqn:ES}
\begin{equation} \label{def:U_n}
	U_n: \quad 4u_1u_2u_3 = n(u_1u_2 + u_1u_3 + u_2u_3) \subset \mathbb{A}_\QQ^3.
\end{equation}
This is an affine cubic surface, and geometrically a so-called \emph{log K$3$ surface}. Many interesting classical Diophantine equations turn out to concern log K3 surfaces, and their integer points are an active area of research \cite{CTW12, CTWX, Har17, Har19, JS17, LM19}. Note that $U_n$ is singular, with the unique singular point lying at the origin. 

We let $\mathcal{U}_n$ denote the natural model for $U_n$ given by the same equation in $\mathbb{A}^3_{\ZZ}$. Note that $U_1 \cong U_n$ over $\QQ$ for all $n \in \NN$, by simply rescaling the $u_i$. The Erd\H{o}s--Straus conjecture therefore concerns existence of certain integer points on different models over $\ZZ$ of the same surface over $\QQ$; in particular this nicely highlights the fact that  different models of the same surface can give rise to very different problems in general.

Let $\pi_0(U_n(\RR))$ be the set of connected components of $U_n(\RR)$ and $\Adele_{\QQ,f}$ the ring of finite adeles.
One says that $U_n$ \emph{satisfies strong approximation} if $U_n(\QQ)$ has dense image in  $
\sU_n(\Adele_\QQ)_{\bullet}:=\pi_0(U_n(\RR)) \times \sU_n(\Adele_{\QQ,f});
$
equivalently, if 
\begin{equation} \label{eqn:W}
	U_n(\QQ) \cap W \neq \emptyset
\end{equation}
for all non-empty open subsets $W \subset \sU_n(\Adele_\QQ)_{\bullet}$. We work with $\sU_n(\Adele_\QQ)_{\bullet}$ since $U_n(\QQ) \subset \sU_n(\Adele_\QQ)$ is discrete as $U_n$ is affine, hence clearly not dense.
We let
\[U_n(\RR)_+ = \{ \mathbf{u} \in U_n(\RR) : u_1,u_2,u_3 > 0\}. \]
We will show that $U_n(\RR)_+$ is a connected component of $U_n(\RR)$, and its complement is also a connected component. We define $\sU_n(\NN) := \sU_n(\ZZ) \cap U_n(\RR)_+$. The Erd\H{o}s--Straus conjecture  is equivalent to \eqref{eqn:W} for $W = \{U_n(\RR)_+\}\times \prod_p \sU_n(\ZZ_p)$, hence stipulates that a special case of strong approximation holds. One can even formulate the conjecture as a problem of strong approximation for $U_1$; here it is equivalent to \eqref{eqn:W} for $U_1$ and $W_n$ for all $n \geq 2$, where
\[W_n = \{U_1(\RR)_+\}\times \prod_{p \mid n}\{ \bu_p \in  U_1(\QQ_p) : v_p(u_i) \leq -v_p(n) \mbox{ for all } i \} \times \prod_{p \nmid n} \sU_1(\ZZ_p). \]

We now recall how one can use the Brauer group to study this problem (see \cite[\S8.2]{Poo17} for further background on the Brauer--Manin obstruction). Recall that there is a right continuous pairing
\[\Br U_n \times \sU_n(\Adele_\QQ)_{\bullet} \to \QQ/\ZZ \]
given by pairing with an element of $\Br U_n$ and taking the sum of local invariants. For an open subset $W \subset \sU_n(\Adele_\QQ)_{\bullet}$, we define $W^{\Br}$ to be the right kernel of this pairing restricted to $W$. We have $U_n(\QQ) \cap W \subset W^{\Br}$; in particular, if $W^{\Br} = \emptyset$ then $U_n(\QQ) \cap W = \emptyset$ and one says that there is a Brauer--Manin obstruction to strong approximation (cf.~\eqref{eqn:W}).
We first calculate the Brauer group.

\begin{theorem} \label{thm:Brauer}
	We have
	\[\Br U_n / \Br \QQ \cong \ZZ/2\ZZ \]
	generated by the quaternion algebra $(-u_1/u_3,-u_2/u_3)$.
\end{theorem}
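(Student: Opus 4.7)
My plan is to compute $\Br U_n$ by passing to a smooth projective compactification and then splitting the Brauer group into its algebraic and transcendental parts. Homogenising the defining equation yields a cubic surface $X_n\subset\mathbb{P}^3_\QQ$; a partial-derivative computation shows its only singularities are four ordinary double points (the origin and the three coordinate points at infinity), so $X_n$ is a Cayley cubic. Let $\pi\colon\tilde X_n\to X_n$ be the minimal (crepant) resolution, a smooth projective weak del Pezzo surface of degree three. Write $E_0,E_1,E_2,E_3$ for the four exceptional $(-2)$-curves and $L_1,L_2,L_3$ for the proper transforms of the lines $\{u_0=u_i=0\}$ at infinity. Since the origin is a rational $A_1$-singularity of codimension two, purity gives $\Br U_n=\Br V_n$ where $V_n:=U_n\setminus\{0\}$; moreover $V_n\cong \tilde X_n\setminus D$ with $D=L_1+L_2+L_3+E_0+E_1+E_2+E_3$ an SNC boundary, whose dual graph is an isolated $E_0$ together with a hexagonal cycle $L_1-E_3-L_2-E_1-L_3-E_2-L_1$, since each $L_i$ passes through the two infinite nodes not on the $i$th coordinate axis.

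The Hochschild--Serre spectral sequence then produces an exact sequence
\[ 0 \to H^1(\Gal_\QQ,\Pic V_{n,\bar\QQ}) \to \Br V_n/\Br\QQ \to (\Br V_{n,\bar\QQ})^{\Gal_\QQ}, \]
and I would argue the outer terms are $0$ and $\ZZ/2\ZZ$ respectively. For the algebraic part it suffices to show $\Pic V_{n,\bar\QQ}=0$: the only codimension-one subvarieties of $V_n$ are the three axes $A_i=\{u_j=u_k=0\}\setminus\{0\}$, and the principal divisors $\div u_i=A_j+A_k$ together with $\div\bigl((u_j-u_k)/u_i\bigr)=A_i-A_j-A_k$ (the latter coming from the local identity $u_j\equiv -u_k$ modulo the uniformiser along $A_i$, a direct consequence of the defining equation) generate all of $\bigoplus_i \ZZ A_i$.

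For the transcendental part, the Grothendieck residue sequence on the compactification realises $\Br V_{n,\bar\QQ}$ as the subgroup of ``compatible residues'' in $\bigoplus_i H^1(D_i^{\mathrm{sm}},\QQ/\ZZ)$. Geometrically each hexagon component contributes $H^1(\mathbb{G}_{m,\bar\QQ},\QQ/\ZZ)=\QQ/\ZZ(-1)$, while $E_0\cong\mathbb{P}^1_{\bar\QQ}$ contributes zero. The six compatibility relations at the vertices of the hexagon force $\chi_i=\chi_{i+1}$ around the cycle, collapsing the six copies of $\QQ/\ZZ(-1)$ to a single one. On this copy $\Gal_\QQ$ acts through the inverse cyclotomic character, and its invariants are precisely the $2$-torsion $\ZZ/2\ZZ$ (since $\mu_n\subset\QQ$ only for $n\le 2$).

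Finally, to exhibit the generator: a tame-symbol calculation along each axis $A_i$, again using $u_j\equiv-u_k$ modulo the uniformiser, shows that the quaternion algebra $(-u_1/u_3,-u_2/u_3)$ --- initially defined on $\{u_1u_2u_3\neq 0\}$ --- has trivial residue along every axis, hence extends to a genuine class in $\Br V_n=\Br U_n$; computing its residues along the hexagon shows that it maps to the nontrivial element of $(\QQ/\ZZ(-1))^{\Gal_\QQ}$. The main obstacle, I expect, is the transcendental step: verifying carefully that the six vertex-compatibilities really do collapse the direct sum down to a single $\QQ/\ZZ(-1)$, together with the correct identification of the resulting Galois action as the inverse cyclotomic character.
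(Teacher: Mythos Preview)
Your proposal contains a genuine gap that breaks the argument at two linked points.

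\textbf{The purity step is false, not merely unjustified.} You write that ``purity gives $\Br U_n=\Br V_n$'' where $V_n=U_n\setminus\{0\}$. Grothendieck purity needs the ambient scheme to be regular, and $U_n$ is singular at the origin; but more importantly the conclusion itself is wrong. One has $V_n\cong\widetilde U_n\setminus E$ with $E$ the exceptional conic over the $A_1$-point, and although $H^1(E_{\bar\QQ},\QQ/\ZZ)=0$, over $\QQ$ one has $E\cong\mathbb P^1_\QQ$ and $H^1_{\mathrm{\acute et}}(E,\QQ/\ZZ)\cong\Hom(\Gal_\QQ,\QQ/\ZZ)\neq 0$. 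So $\Br\widetilde U_n\hookrightarrow\Br V_n$ is a proper inclusion, and hence (via the paper's identification $\Br U_n\cong\Br\widetilde U_n$) so is $\Br U_n\hookrightarrow\Br V_n$. Concretely, for any non-square $a\in\QQ^\times$ the quaternion algebra $(a,\,u_2u_3/u_1)$ lies in $\Br V_n$ but is ramified along $E$, so does not extend to $U_n$.

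\textbf{The Picard computation is wrong, and this is why the first error stays hidden.} You claim $\Pic V_{n,\bar\QQ}=0$ using $\div\bigl((u_j-u_k)/u_i\bigr)=A_i-A_j-A_k$. But $u_j-u_k$ also vanishes along the irreducible curve $C'\colon u_j=u_k,\ 4u_iu_j=n(2u_i+u_j)$, so the divisor has an extra component and your relation is false. In fact $\Pic V_{n,\bar\QQ}\cong\ZZ/2\ZZ$: on $\widetilde U_n$ one has $\Pic_{\bar\QQ}=\ZZ\cdot[L_{1,2}]$ and $\div(u_3)=E+L_{1,3}+L_{2,3}$, so $[E]=-2[L_{1,2}]$ and $\Pic(V_n)_{\bar\QQ}=\ZZ/2\ZZ$. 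The Galois action is trivial, whence $H^1(\Gal_\QQ,\Pic V_{n,\bar\QQ})\cong\QQ^\times/(\QQ^\times)^2$ and $\Br_1 V_n/\Br\QQ$ is \emph{infinite}. So even your Hochschild--Serre bookkeeping, done correctly on $V_n$, cannot yield $\ZZ/2\ZZ$.

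The paper avoids both problems by working on the resolution $\widetilde U_n$ rather than on $U_n\setminus\{0\}$: there $\Pic_{\bar\QQ}=\ZZ$ is torsion-free, so $\Br_1\widetilde U_n=\Br\QQ$, and the geometric Brauer group is computed via the much simpler boundary $\widetilde U_n\setminus\Gm^2=L_{1,2}\sqcup L_{1,3}\sqcup L_{2,3}$ (three disjoint copies of $\mathbb A^1$, automatically killing all residues). The passage back from $\Br\widetilde U_n$ to $\Br U_n$ is then handled by a separate argument (the paper's Theorem~\ref{thm:rational} on rational surface singularities, plus an explicit surjectivity check $\Pic\widetilde U_n\twoheadrightarrow\Pic E$). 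Your hexagon-residue computation for the transcendental part is salvageable, but you must transplant it to $\widetilde U_n$ or $\tilde X_n\setminus(\text{hexagon})$ --- not to $U_n\setminus\{0\}$ --- and you still need an independent argument to descend from the resolution to the singular $U_n$.
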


The algebra in Theorem \ref{thm:Brauer} is transcendental, meaning that it does not become trivial after base change to an algebraic closure of $\QQ$, so we will obtain new cases of a transcendental Brauer--Manin obstruction on log K3 surfaces.
One novel feature is that there are few examples in the literature where Brauer groups of singular varieties have been computed, as Brauer group computations usually use Grothendieck's purity theorem which requires regularity (or at least a singular locus of large codimension). We prove Theorem \ref{thm:Brauer} by first calculating the Brauer group of a desingularisation, then showing that every such Brauer group element comes from the singular surface. 

This latter property is a special case of a more general result about Brauer groups of singular surfaces, which may be of independent interest and does not seem to have been noticed before. Recall that a normal variety $Y/k$ is said to have only \emph{rational singularities} if there exists a desingularisation $\widetilde{Y} \to Y$ for which all the higher direct images of $\OO_{\widetilde{Y}}$ are trivial.

\begin{theorem} \label{thm:rational}
	Let $U$ be a normal surface over a field $k$ of characteristic $0$
	with rational singularities and 
	$f \colon \widetilde{U} \to U$ a desingularisation.
	Then the induced map
	$\Br U \to \Br \widetilde{U}$
	is surjective.	
\end{theorem}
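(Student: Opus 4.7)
The plan is to analyse the Leray spectral sequence
\[
E_2^{p,q} = H^p(U, R^q f_* \Gm) \Longrightarrow H^{p+q}(\widetilde{U}, \Gm)
\]
in étale cohomology. Normality of $U$ gives $f_* \Gm = \Gm$, and since $f$ is an isomorphism away from the zero-dimensional singular locus $Z \subset U$, the sheaves $R^q f_* \Gm$ for $q \geq 1$ are supported on $Z$, so $H^p(U, R^q f_* \Gm) = 0$ for all $p, q \geq 1$. Granted the further vanishing $R^2 f_* \Gm = 0$, the only nonzero $E_\infty$-term contributing to $H^2(\widetilde{U}, \Gm)$ is $E_\infty^{2,0}$, a quotient of $E_2^{2,0} = \Br U$, and the induced map $\Br U \twoheadrightarrow \Br \widetilde{U}$ is surjective.

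The main task is therefore to show $R^2 f_* \Gm = 0$, which I would do stalk-by-stalk. Fix a singular point $u \in Z$ and set $Y = \Spec \OO^{\mathrm{sh}}_{U, u}$, $X = \widetilde{U} \times_U Y$. Strict henselicity of $Y$ makes all higher étale cohomology on $Y$ vanish, so the Leray spectral sequence for $X \to Y$ collapses and $(R^2 f_* \Gm)_{\bar u} = H^2(X, \Gm)$. Since $X$ is regular, $H^2(X, \Gm)$ is torsion; via the Kummer sequence on $X$, it is then enough to show that $\Pic X / n \to H^2(X, \mu_n)$ is surjective for every $n \geq 1$. Here the rational singularity hypothesis enters crucially: by a theorem of Artin, the geometric exceptional fibre $E = f^{-1}(\bar u)$ is a tree of smooth rational curves, so $\Br E = 0$ by Tsen, and the Kummer sequence on $E$ identifies $H^2(E, \mu_n)$ with $\Pic(E)/n$. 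Proper base change for the torsion sheaf $\mu_n$ then gives $H^2(X, \mu_n) \cong H^2(E, \mu_n)$, so the problem reduces to showing that the restriction $\Pic X \to \Pic E$ is surjective modulo $n$.

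I expect this remaining surjectivity of $\Pic X \to \Pic E$ to be the main obstacle. The approach is to pass to the formal completion $\widehat{X}$ of $X$ along $E$: the short exact sequences
\[
1 \to \mathcal{I}^m/\mathcal{I}^{m+1} \to \OO^*_{X^{(m+1)}} \to \OO^*_{X^{(m)}} \to 1
\]
on the infinitesimal thickenings of $E$ (available in characteristic $0$ via $1+x \mapsto x$) produce long exact sequences in which the cokernel of each transition map $\Pic X^{(m+1)} \to \Pic X^{(m)}$ lies in $H^2(E, \mathcal{I}^m/\mathcal{I}^{m+1})$, which vanishes since $\dim E = 1$. Every transition is therefore surjective, and Mittag-Leffler gives $\Pic \widehat{X} \twoheadrightarrow \Pic E$; Grothendieck's existence theorem combined with Artin approximation identifies $\Pic X$ with $\Pic \widehat{X}$. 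This establishes the surjectivity of $\Pic X \to \Pic E$, hence $R^2 f_* \Gm = 0$, and the Leray argument of the first paragraph completes the proof.
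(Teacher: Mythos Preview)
Your Leray-spectral-sequence strategy is the same as the paper's, but there is a genuine gap in the first paragraph. The claim that $H^p(U, R^q f_* \Gm) = 0$ for all $p,q \geq 1$ simply because $R^q f_* \Gm$ is supported on the zero-dimensional locus $Z$ is false in \'etale cohomology. If $i_P \colon \Spec \kappa(P) \hookrightarrow U$ is a closed point, then $i_{P*}$ is exact and $H^p(U, i_{P*} M) = H^p(\Gal(\overline{\kappa(P)}/\kappa(P)), M)$ is Galois cohomology of the residue field, which need not vanish for $p \geq 1$. In particular the term $E_2^{1,1} = H^1(U, R^1 f_* \Gm)$ contributes (via $E_\infty^{1,1}$) to the associated graded of $\Br \widetilde{U}$, and without its vanishing your surjectivity conclusion does not follow. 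The paper handles this by identifying the stalk $(R^1 f_* \Gm)_{\bar P_j}$ with $\Pic \bar E_j$ (citing Lipman) and then using the very theorem of Artin you quote to see that $\Pic \bar E_j \cong \ZZ^{d_j}$ is a permutation module on the components of the tree of $\PP^1$s, whence $H^1(k_j, \Pic \bar E_j) = 0$ by Shapiro's lemma. You already have Artin's input in hand, so the fix is short, but it is not the argument you gave.

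Apart from this, the two proofs diverge in how they dispose of $R^2 f_* \Gm$. The paper only shows the stalks are torsion-free (Kummer sequence plus proper base change, exactly as you do up to identifying $H^2(E,\mu_m)\cong \Pic \bar E_j/m$), and then observes that since $\Br \widetilde U$ is torsion the edge map $\Br \widetilde U \to H^0(U, R^2 f_* \Gm)$ must be zero. You instead go further and prove $R^2 f_* \Gm = 0$ outright by establishing that $\Pic X \to \Pic E$ is surjective; the paper gets this surjectivity by citing Lipman, whereas your formal-neighbourhood/Artin-approximation sketch amounts to reproving that lemma. Your route is a little more work but yields a cleaner vanishing statement; the paper's route is quicker once one is willing to quote Lipman.
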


One could hope to use the Brauer group element from Theorem \ref{thm:Brauer} to disprove the Erd\H{o}s--Straus conjecture by showing that $(U_n(\RR)_{+} \times \prod_p \sU_n(\ZZ_p))^{\mathrm{Br}} = \emptyset$; our next result says that this does not happen.

\begin{theorem} \label{thm:BM}
	For all $n \in \NN$ we have
	\begin{align} 
		&(U_n(\RR)_{+} \times \prod_p \sU_n(\ZZ_p))^{\mathrm{Br}} \neq \emptyset \label{eqn:BR},\\
		&(U_n(\RR)_{+} \times \prod_p \sU_n(\ZZ_p))^{\mathrm{Br}} \neq U_n(\RR)_{+} \times \prod_p \sU_n(\ZZ_p). \label{eqn:BR_SA}
	\end{align}
\end{theorem}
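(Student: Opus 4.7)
The plan is to leverage Theorem~\ref{thm:Brauer}: since $\Br U_n/\Br\QQ\cong\ZZ/2\ZZ$ is generated by the single class $\sB=(-u_1/u_3,-u_2/u_3)$, the Brauer--Manin obstruction collapses to the continuous map $\phi\colon \sU_n(\Adele_\QQ)_\bullet\to\ZZ/2\ZZ$ sending an adelic point to the sum of its local invariants of $\sB$. Both inclusions \eqref{eqn:BR} and \eqref{eqn:BR_SA} together amount to the claim that $\phi$ takes both values on $W:=U_n(\RR)_+\times\prod_p\sU_n(\ZZ_p)$. I begin by pinning down the archimedean contribution: on $U_n(\RR)_+$ all three coordinates are positive, so $-u_1/u_3<0$ and $-u_2/u_3<0$, which gives $(-u_1/u_3,-u_2/u_3)_\infty=-1$ and therefore $\phi_\infty\equiv 1/2$ on $W$. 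Thus the question reduces to the surjectivity of $\sum_p\phi_p\colon \prod_p\sU_n(\ZZ_p)\to\ZZ/2\ZZ$.

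Statement \eqref{eqn:BR} is a reformulation of Theorem~\ref{thm:No_Br}, whose proof is given separately, so the substantive new content is \eqref{eqn:BR_SA}. For this I exhibit an explicit adelic integer point with $\phi=1/2$. Choose an integer solution $\bu\in\ZZ^3$ of \eqref{eqn:ES}, whose existence is guaranteed by~\cite{Jar04}. A key observation is that the substitution $(u_1,u_2,u_3)\mapsto-(u_1,u_2,u_3)$ flips the sign of the left-hand side of \eqref{def:U_n} but preserves the right-hand side, so no integer solution with $u_1u_2u_3\neq 0$ can have all three coordinates negative. Hence any $\bu\in\ZZ^3\setminus\NN^3$ has coordinates of mixed sign, so at least one of $-u_1/u_3,-u_2/u_3$ is positive and $\phi_\infty(\bu)=0$. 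Global reciprocity at the rational point $\bu$ then yields $\sum_p\phi_p(\bu)=0$, and the adelic point $(\bu_\infty^+,(\bu)_p)$ obtained by keeping these $p$-adic components but replacing the archimedean one by any $\bu_\infty^+\in U_n(\RR)_+$ lies in $W$ and satisfies $\phi=1/2$, as required.

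The one case not covered by this argument is when every integer solution to \eqref{eqn:ES} is already natural (in particular the Erd\H{o}s--Straus conjecture holds at $n$), in which case the above construction returns $\phi=0$ instead of $\phi=1/2$. To cover it I will modify the adelic point at a single bad prime: choose $p^\ast\mid 2n$ together with $\bu_{p^\ast}^\ast\in\sU_n(\ZZ_{p^\ast})$ satisfying $\phi_{p^\ast}(\bu_{p^\ast}^\ast)\neq\phi_{p^\ast}(\bu)$ and replace the $p^\ast$-adic component of $\bu$ by $\bu_{p^\ast}^\ast$; this shifts $\phi$ by exactly $1/2$.

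The main obstacle is thus the local claim that $\phi_p$ is non-constant on $\sU_n(\ZZ_p)$ for at least one prime $p\mid 2n$. I expect to prove this by direct $p$-adic computation: parametrising $\sU_n(\ZZ_p)$ by Hensel's lemma near two carefully chosen integer points whose $p$-adic valuation profiles of $(u_1,u_2,u_3)$ differ, and verifying that the Hilbert symbol $(-u_1/u_3,-u_2/u_3)_p$ takes opposite values on them. This local analysis is of the same flavour as, and will reuse the explicit symbol manipulations behind, Theorems~\ref{thm:Hilbert} and~\ref{thm:Hilbert2}, where the dependence on local data is worked out in full.
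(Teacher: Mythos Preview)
Your proposal is correct in spirit and ultimately rests on the same key local ingredient as the paper—surjectivity of the local invariant $\inv_p\alpha\colon\sU_n(\ZZ_p)\to\{\pm1\}$ at some prime $p\mid n$ (Lemmas~\ref{lem:surj} and~\ref{lem:2_even})—but the route you take is more circuitous than necessary, and one step is circular as written.

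First, you defer \eqref{eqn:BR} to Theorem~\ref{thm:No_Br}, but in the paper Theorem~\ref{thm:No_Br} is not proved independently: it is stated as a consequence of \eqref{eqn:BR}. So this deferral is circular unless you supply a separate argument, and the natural such argument is again local surjectivity. Second, for \eqref{eqn:BR_SA} you introduce a case split on whether a non-natural integer solution exists. The first branch is an elegant use of global reciprocity (essentially Theorem~\ref{thm:Hilbert2}), but the second branch forces you back onto the local surjectivity claim anyway. Since that claim alone already proves both \eqref{eqn:BR} and \eqref{eqn:BR_SA} at once, the case split buys nothing. The paper's proof does exactly this: fix any point of $W=U_n(\RR)_+\times\prod_p\sU_n(\ZZ_p)$ (non-empty since $\sU_n(\ZZ)\neq\emptyset$ and $U_n(\RR)_+$ contains e.g.\ $(3n/4,3n/4,3n/4)$), and vary its component at a single prime $p\mid n$ where $\inv_p\alpha$ is surjective to realise both values of $\phi$.

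One small point on your local claim: you write $p^\ast\mid 2n$, but for odd $n$ the invariant at $2$ is in fact constant on $\sU_n(\ZZ_2)$ (Lemma~\ref{lem:2_odd}), so $p^\ast=2$ will not work there. The correct statement is that $\inv_p\alpha$ is surjective for every odd $p\mid n$, and for $p=2$ when $n$ is even; since $n\ge 2$, one of these always applies. Your sketch via Hensel's lemma is on the right track and is exactly how the paper proves Lemmas~\ref{lem:surj} and~\ref{lem:2_even}.
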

The first equation \eqref{eqn:BR} is a more precise version of Theorem  \ref{thm:No_Br}. The second equation \eqref{eqn:BR_SA} says that nonetheless there is always a Brauer--Manin obstruction to strong approximation for natural number solutions (as manifested by  Theorems \ref{thm:Hilbert} and \ref{thm:Hilbert2}).

Despite there being a Brauer--Manin obstruction to strong approximation, it turns out that not every failure of strong approximation is explained by the Brauer--Manin obstruction.

\begin{theorem} \label{thm:Wei}
	For all $n \in \NN$, the map
	\[ U_n(\QQ) \to \sU_n(\AA_\QQ)^{\Br}_\bullet \]
	does not have dense image.
\end{theorem}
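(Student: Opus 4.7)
The plan is to exhibit a non-empty open subset of $\sU_n(\AA_\QQ)^{\Br}_\bullet$ disjoint from the image of $U_n(\QQ)$, exploiting the fact that the set of natural number solutions is finite while the Brauer--Manin set at the natural integral model is topologically fat. Let
\[
W = \{U_n(\RR)_+\} \times \prod_p \sU_n(\ZZ_p) \subset \sU_n(\AA_\QQ)_\bullet,
\]
a clopen subset, and set $W^{\Br} = W \cap \sU_n(\AA_\QQ)^{\Br}_\bullet$. Equation \eqref{eqn:BR} of Theorem \ref{thm:BM} gives $W^{\Br} \neq \emptyset$. Because Theorem \ref{thm:Brauer} identifies $\Br U_n / \Br \QQ$ with $\ZZ/2\ZZ$, the Brauer--Manin pairing reduces to a single locally constant map $\sU_n(\AA_\QQ)_\bullet \to \tfrac{1}{2}\ZZ/\ZZ$, so that $\sU_n(\AA_\QQ)^{\Br}_\bullet$ is clopen in $\sU_n(\AA_\QQ)_\bullet$ and $W^{\Br}$ is open in $\sU_n(\AA_\QQ)^{\Br}_\bullet$.

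Next I would check that $W^{\Br}$ is uncountable. For any prime $p$ of good reduction for $\sU_n$, Hensel's lemma lifts smooth $\FF_p$-points to an uncountable subset of $\sU_n(\ZZ_p)$; varying the $p$-component of any chosen point of $W^{\Br}$ within a sufficiently small $p$-adic ball preserves the locally constant Brauer invariant at $p$, and therefore produces an uncountable subset of $W^{\Br}$. On the other hand a rational point $Q \in U_n(\QQ)$ has image in $W$ precisely when its real coordinates are positive and it is $p$-adically integral at every $p$, that is, when $Q \in \sU_n(\NN)$; and $\sU_n(\NN)$ is finite by Lemma \ref{lem:Zariski}.

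Setting $V = W^{\Br} \setminus \sU_n(\NN)$, the Hausdorff property of $\sU_n(\AA_\QQ)^{\Br}_\bullet$ ensures that $V$ is open, and the previous two observations ensure that it is non-empty. Any rational point with image in $V$ would a fortiori map into $W$, and hence lie in $\sU_n(\NN)$, contradicting the removal; thus $V$ is disjoint from the image of $U_n(\QQ)$, establishing non-density. Given Theorems \ref{thm:BM} and \ref{thm:Brauer} together with Lemma \ref{lem:Zariski}, the remainder is essentially a topological observation---a finite set cannot be dense in a Hausdorff space with no isolated points---so there is no substantial obstacle; the only technical input worth verifying carefully is the local constancy of the Brauer pairing, which is standard.
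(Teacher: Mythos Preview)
Your argument is correct and takes a genuinely different route from the paper's. The paper argues by contradiction via a density-at-primes statement: using the finiteness of $\Br U_n/\Br\QQ$ and a lemma from \cite{CTWX}, dense image in $\sU_n(\AA_\QQ)^{\Br}_\bullet$ would force $\sU_n(\ZZ)\to\sU_n(\ZZ_p)$ to have dense image for almost all $p$; but since $\sU_n(\ZZ)$ is not Zariski dense (Lemma~\ref{lem:Zariski}), Lang--Weil estimates show $\sU_n(\ZZ)\to\sU_n(\FF_p)$ is not surjective for large $p$, a contradiction. Your approach is more direct and more elementary: you restrict to the positive real component, where the rational points in $W$ are exactly $\sU_n(\NN)$, which is \emph{finite}; then the nonemptiness of $W^{\Br}$ (Theorem~\ref{thm:BM}) together with openness of the Brauer--Manin set and the Hausdorff property immediately gives a nonempty open subset missed by $U_n(\QQ)$. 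Your route avoids Lang--Weil and the external lemma, at the cost of invoking Theorem~\ref{thm:BM} (equation~\eqref{eqn:BR}), which in turn rests on the local-invariant computations of \S\ref{sec:Br}; the paper's route uses only the weaker input that $\Br U_n/\Br\QQ$ is finite, together with the Zariski non-density of all of $\sU_n(\ZZ)$ rather than just the finiteness of $\sU_n(\NN)$. Both are valid; yours is shorter once Theorem~\ref{thm:BM} is in hand.
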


We prove this by showing that $\sU_n(\ZZ)$ is not Zariski dense using real considerations. The conclusion then follows from the fact that $\Br U_n / \Br \QQ$ is finite.

\begin{remark}
	In this paper we focus on the original conjecture of 
	Erd\H{o}s--Straus concerning the equation \eqref{eqn:ES}.
	A more general conjecture, due to Schinzel \cite{Sie56},
	states that given $m \geq 3$, 
	for all $n > n_0(m)$ there exists $u_i \in \NN$ such that
	\[\frac{m}{n} = \frac{1}{u_1} + \frac{1}{u_2} + \frac{1}{u_3}. \]
	These surfaces are again $\QQ$-isomorphic, hence
	Theorem \ref{thm:Brauer} still holds here.
	A minor adaptation of our method shows the following
	analogue of Theorem \ref{thm:Hilbert}: for all solutions with
	$\bu \in \NN^3$ we have
	\[\prod_{p \mid 2nm} (-u_1/u_3, -u_2/u_3)_p = -1. \]
	Moreover versions of Theorems \ref{thm:Hilbert2}, \ref{thm:BM}, and \ref{thm:Wei} also hold in this case.
\end{remark}

\subsection*{Outline of the paper}
In \S 2 we study the geometry of Erd\H{o}s-Straus surfaces over a field $k$ of characteristic zero. We calculate the desingularisation, the Picard group, and the Brauer group (Theorem \ref{thm:Brauer}). In \S 3 we apply our knowledge of the Brauer group to prove the remaining results from the introduction. The appendix explains in more detail how Corollary \ref{cor:Yamamoto} relates to results of Yamamoto \cite{Yam65}.

\subsection*{Notation}
For a field $k$, we denote by $\mu(k)$ the group of roots of unity in $k$. For a scheme $X$, we denote by $\Br X = \HH^2(X, \Gm)$ its (cohomological) Brauer group.

\subsection*{Acknowledgements}
We thank Yang Cao, Jean-Louis Colliot-Th\'{e}l\`{e}ne, and Christian Elsholtz for useful comments and references. 
This work was undertaken at the Institut Henri Poincar\'e during the trimester ``Reinventing rational points''. The authors thank the organisers and staff for ideal working conditions. We are grateful to the referee for numerous helpful comments. The second-named author is supported by EPSRC grant EP/R021422/2.

\section{Geometry of Erd\H{o}s--Straus surfaces} \label{sec:Geometry}

In this section we study the geometry of the surfaces $U_n$ from \eqref{def:U_n}. We work over a field $k$ of characteristic $0$ with algebraic closure $\bar{k}$. The primary aim of this section is to prove Theorem \ref{thm:Brauer}. We also prove a result of independent interest on Brauer groups of rational surface singularities (Theorem \ref{thm:rational}).

\subsection{The Cayley cubic and its lines} \label{sec:Cayley}
We let 
\[S_n: \quad 4x_1x_2x_3 = n(x_0x_1x_2 + x_0x_1x_3 + x_0x_2x_3). \]
 be the closure of $U_n$ in $\PP^3_k$, with $U_n$ being the affine patch $x_0 \neq 0$ with variables $u_1,u_2,u_3$.
For $n=-4$, this projective surface is known as  Cayley's (nodal) cubic surface; 
every $S_n$ is isomorphic over $k$ to the Cayley cubic surface. The surface $S_n$ has $4$ singularities, each of type $\mathbf{A}_1$, given by setting all but one coordinate equal to $0$; we let $P= (1:0:0:0)$ be the singularity in $U_n$. The Cayley cubic has $9$ lines over $\bar{k}$. This induces $6$ lines  on $U_n$, of which we are interested
 in the following $3$ lines
\[
	L_{i,j}: u_i = u_j = 0, \quad i \neq j \in \{1,2,3\}.
\]
\subsection{Desingularisation} \label{sec:desing}
Let $\widetilde{U}_n$ be the desingularisation of $U_n$ given by blowing up $P$ once, with exceptional curve $E \subset \widetilde{U}_n$. By abuse of notation, we denote by $L_{i,j}$ the strict transform of the  relevant lines in $\widetilde{U}_n$. We have the equation
\[\widetilde{U}_n: \quad 
4 u_1y_2y_3 = n(y_1y_2 + y_1y_3 + y_2y_3), \,\,
y_iu_j = y_ju_i, \,\, i, j \in \{1,2,3\} \quad \subset \mathbb{A}^3 \times \PP^2, \]
where $u_1,u_2,u_3$ are coordinates on $\mathbb{A}^3$, and $y_1,y_2,y_3$ are homogeneous coordinates on $\PP^2$.
With respect to this equation, the curves of interest to us are
\[E: u_1=u_2=u_3=0, \quad L_{i,j}:y_i = y_j = 0, \, i \neq j \in \{1,2,3\}. \]
One checks that
\begin{equation} \label{eqn:div}
	\frac{u_i}{u_j}= \frac{y_i}{y_j}, \quad \div \frac{y_1}{y_3} = L_{1,2} - L_{2,3}, \quad 	\div \frac{y_2}{y_3} = L_{1,2} - L_{1,3}.
\end{equation}


\subsection{Parametrisation}
Any cubic surface with a rational singularity is rational, with a birational parametrisation given by projecting away for the singular point. Applying this to the singularity $P$, we obtain the birational map to $\PP^2$.
On the desingularisation, this  becomes the birational morphism 
\begin{equation} \label{eqn:param}
	\widetilde{U}_n \to \PP^2, \quad (u_1,u_2,u_3; y_1:y_2:y_3) \mapsto (y_1:y_2:y_3).
\end{equation}
We let
\begin{equation} \label{def:V}
	V_n:=  \widetilde{U}_n \setminus \{y_1y_2y_3 = 0\}.
\end{equation}
Note that the boundary is the disjoint union of the lines $L_{i,j}$
\begin{equation} \label{eqn:disjoint}
\widetilde{U}_n \setminus V_n = L_{1,2} \sqcup L_{2,3} \sqcup L_{3,1}.
\end{equation}
The following important observation will be used numerous times.

\begin{lemma} \label{lem:Gm}
	We have $V_n \cong \Gm^2$ and $\HH^0(V_{n,\bar{k}}, \Gm) 
	\cong 
	\bar{k}^* \bigoplus \ZZ^2$,  with the $\ZZ^2$ factor generated by $y_1/y_3$ and $y_2/y_3$.
\end{lemma}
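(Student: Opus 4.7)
The plan is to identify $V_n$ with $\Gm^2$ by showing that the projection $(\bu; y) \mapsto (y_1:y_2:y_3)$ from \eqref{eqn:param}, when restricted to $V_n$, is an isomorphism onto the open torus $\Gm^2 \subset \PP^2$ of points with $y_1y_2y_3 \neq 0$. The units computation is then immediate from the structure of a split torus.

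To produce the inverse, I would work in the affine chart $y_3 \neq 0$ of $\mathbb{A}^3 \times \PP^2$, which by construction contains all of $V_n$. Normalising $y_3 = 1$, the relations $y_i u_j = y_j u_i$ force $u_1 = y_1 u_3$ and $u_2 = y_2 u_3$, and the defining equation $4u_1 y_2 y_3 = n(y_1 y_2 + y_1 y_3 + y_2 y_3)$ collapses to
\[ 4 y_1 y_2 u_3 = n(y_1 y_2 + y_1 + y_2). \]
On $V_n$ the coefficient $y_1 y_2$ is invertible, so this determines $u_3$ as a Laurent polynomial in $y_1, y_2$, and hence determines $u_1$ and $u_2$ as well. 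Packaging these formulas gives a regular section $\Gm^2 \to V_n$ inverse to the projection, so $V_n \cong \Gm^2$.

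With the isomorphism in hand, the second assertion reduces to the standard computation that $\HH^0(\Gm^2_{\bar k}, \Gm) = \bar k[y_1^{\pm 1}, y_2^{\pm 1}]^{\times} = \bar k^* \oplus \ZZ^2$, with the free part generated by the two coordinates on $\Gm^2$; under our identification these coordinates pull back to $y_1/y_3$ and $y_2/y_3$. I do not anticipate a genuine obstacle: the entire content of the argument is the explicit formula for $u_3$, and the remainder is bookkeeping or a standard torus fact.
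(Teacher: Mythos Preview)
Your argument is correct and is exactly the approach the paper takes: show that the projection \eqref{eqn:param} restricts to an isomorphism $V_n \to \Gm^2$, then invoke the standard description of units on a split torus. You have simply made the inverse explicit where the paper leaves it implicit.
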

\begin{proof}
	That $V_n \cong \Gm^2$ follows from the fact that
	the map \eqref{eqn:param} becomes an isomorphism onto 
	its image when restricted to $V_n$. The second part follows from
	the fact that the invertible regular functions on $\Gm^2$
	are generated by characters and non-zero constants.
\end{proof}

\begin{lemma} \label{lem:constants}
	$\HH^0(\widetilde{U}_{n,\bar{k}}, \Gm) = \bar{k}^*.$
\end{lemma}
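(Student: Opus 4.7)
The plan is to exploit the open inclusion $V_n \hookrightarrow \widetilde{U}_n$ together with the explicit divisor computations in \eqref{eqn:div}. Any global unit $f \in \HH^0(\widetilde{U}_{n,\bar{k}}, \Gm)$ restricts to a global unit on $V_{n,\bar{k}}$, so by Lemma \ref{lem:Gm} there exist $c \in \bar{k}^*$ and $a,b \in \ZZ$ with
\[ f = c \cdot (y_1/y_3)^a (y_2/y_3)^b. \]
The goal is to force $a = b = 0$.

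Next I would examine the divisor of $f$ on $\widetilde{U}_{n,\bar{k}}$. Since $f$ is a unit on $V_n$, the support of $\div(f)$ is contained in the complement $\widetilde{U}_n \setminus V_n = L_{1,2} \sqcup L_{2,3} \sqcup L_{3,1}$ of \eqref{eqn:disjoint}. Using \eqref{eqn:div} and linearity, I compute
\[ \div(f) = a(L_{1,2} - L_{2,3}) + b(L_{1,2} - L_{1,3}) = (a+b) L_{1,2} - b L_{1,3} - a L_{2,3}. \]
Since $f$ is actually a unit on all of $\widetilde{U}_{n,\bar{k}}$, we must have $\div(f) = 0$ as a Weil divisor. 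The three lines $L_{1,2}, L_{2,3}, L_{1,3}$ are distinct irreducible curves on the smooth surface $\widetilde{U}_{n,\bar{k}}$, so they are linearly independent in the group of Weil divisors; hence all three coefficients vanish, yielding $a = b = 0$ and $f = c \in \bar{k}^*$.

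There is no real obstacle here: the work was already done in setting up the parametrisation and the divisor formulas of \S\ref{sec:desing}, and the argument is a direct unit-chase on the open subset $V_n \cong \Gm^2$. The only mild subtlety is the justification that $\div(f) = 0$ as a Weil divisor characterises units on the smooth surface $\widetilde{U}_{n,\bar{k}}$, which is standard.
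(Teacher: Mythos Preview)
Your argument is correct and is exactly the paper's proof, spelled out in more detail: restrict a unit to $V_n$, use Lemma~\ref{lem:Gm} to write it as $c(y_1/y_3)^a(y_2/y_3)^b$, then observe via \eqref{eqn:div} that its divisor on $\widetilde{U}_{n,\bar{k}}$ is nonzero unless $a=b=0$. The paper compresses this into two sentences, but the content is identical.
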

\begin{proof}
	By Lemma \ref{lem:Gm} any invertible regular 
	function must be a non-trivial 
	product of powers of $y_1/y_3$ and $y_2/y_3$, modulo constants.
	However, such a function cannot be invertible on $\widetilde{U}_n$
	since its divisor is always non-trivial by \eqref{eqn:div}.
\end{proof}

\subsection{Picard group}

\begin{lemma} \label{lem:Pic}
	We have $\Pic \widetilde{U}_n = \Pic \widetilde{U}_{n, \bar{k}} \cong \ZZ$
	generated by $L_{1,2}$.
\end{lemma}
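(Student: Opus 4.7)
The plan is to run the standard excision sequence for the open subset $V_n \cong \Gm^2$ of $\widetilde{U}_n$ and read off the Picard group from the combinatorics of the three boundary components and the two explicit units $y_1/y_3, y_2/y_3$ from Lemma~\ref{lem:Gm}. Concretely, by \eqref{eqn:disjoint} the complement $\widetilde{U}_n \setminus V_n$ is a disjoint union of the three divisors $L_{1,2}, L_{2,3}, L_{1,3}$, so there is a right-exact sequence
\[
\ZZ L_{1,2} \oplus \ZZ L_{2,3} \oplus \ZZ L_{1,3} \xrightarrow{\;\alpha\;} \Pic \widetilde{U}_n \to \Pic V_n \to 0,
\]
and the same sequence over $\bar{k}$. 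Since $V_n \cong \Gm^2$, we have $\Pic V_{n,\bar{k}} = 0$, hence $\alpha$ is surjective (and similarly over $k$).

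Next, I would identify the kernel of $\alpha$. The kernel consists of relations coming from principal divisors whose support lies in the three lines, i.e.\ from rational functions on $\widetilde{U}_n$ that restrict to units on $V_n$. By Lemma~\ref{lem:Gm}, every such function has the form $c \cdot (y_1/y_3)^a (y_2/y_3)^b$ with $c \in \bar{k}^*$; by \eqref{eqn:div} its divisor on $\widetilde{U}_n$ is
\[
(a+b) L_{1,2} - a\, L_{2,3} - b\, L_{1,3}.
\]
Therefore $\ker \alpha$ is the subgroup of $\ZZ^3$ spanned by the two vectors $(1,-1,0)$ and $(1,0,-1)$, and the quotient is freely generated by the image of $L_{1,2}$, with $L_{2,3}$ and $L_{1,3}$ each equal to $L_{1,2}$. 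This gives $\Pic \widetilde{U}_{n,\bar{k}} \cong \ZZ$ generated by $L_{1,2}$, and the identical computation over $k$ (the lines $L_{i,j}$ and the functions $y_i/y_j$ are all defined over $k$) gives $\Pic \widetilde{U}_n \cong \ZZ$ generated by $L_{1,2}$. The natural map $\Pic \widetilde{U}_n \to \Pic \widetilde{U}_{n,\bar{k}}$ then sends generator to generator, proving equality.

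The main obstacle to watch is really just bookkeeping: ensuring the excision sequence is valid in this setting (which is standard since $\widetilde{U}_n$ is smooth and the $L_{i,j}$ are prime Weil divisors) and that Lemma~\ref{lem:Gm} genuinely captures \emph{all} rational functions on $\widetilde{U}_n$ with divisors supported on the boundary — equivalently, that no additional principal relations hide in the Galois-invariant or global sections. The latter is precisely what Lemma~\ref{lem:Gm} provides, so no further work is required.
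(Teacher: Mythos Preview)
Your proof is correct and is essentially the same as the paper's: both run the excision sequence for $V_n \subset \widetilde{U}_n$, use Lemma~\ref{lem:Gm} to identify the units on $V_n$ (hence the relations among the boundary divisors), and read off $\Pic \widetilde{U}_{n,\bar{k}} \cong \ZZ$ from~\eqref{eqn:div}. The paper compresses this into a single four-term exact sequence, but the content is identical.
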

\begin{proof}
	By Lemma \ref{lem:Gm} and \eqref{eqn:div}
	we have the exact sequence
	\[
		0 \to \langle y_1/y_3, y_2/y_3 \rangle \to 
		\langle L_{1,2}, L_{2,3}, L_{3,1} \rangle \to \Pic \widetilde{U}_{n,\bar{k}} \to \Pic V_{n,\bar{k}} \to 0,
	\]
	where the second map associates to a rational function its divisor
	and the third map associates to a divisor its class.
	But $\Pic V_{n,\bar{k}} = 0$ by Lemma \ref{lem:Gm}. 
	The result now follows from \eqref{eqn:div}.
\end{proof}

\subsection{Brauer group}

\subsubsection{Brauer group of $\widetilde{U}_n$}
We denote by $\Br_1 X = \ker(\Br X \to \Br X_{\bar{k}})$
the algebraic Brauer group of a variety $X/k$.

\begin{lemma}
	$\Br_1 \widetilde{U}_{n}  = \Br k.$
\end{lemma}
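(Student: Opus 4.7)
The plan is to apply the Hochschild--Serre spectral sequence
\[ E_2^{p,q} = \HH^p(k, \HH^q(\widetilde{U}_{n,\bar{k}}, \Gm)) \Rightarrow \HH^{p+q}(\widetilde{U}_n, \Gm), \]
whose low-degree terms yield the standard exact sequence
\[ 0 \to \HH^1(k, \bar{k}[\widetilde{U}_n]^*) \to \Pic \widetilde{U}_n \to (\Pic \widetilde{U}_{n,\bar{k}})^{\Gal_k} \to \HH^2(k, \bar{k}[\widetilde{U}_n]^*) \to \Br_1 \widetilde{U}_n \to \HH^1(k, \Pic \widetilde{U}_{n,\bar{k}}). \]

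Next I would feed in what has already been established. By Lemma~\ref{lem:constants}, the unit group $\bar{k}[\widetilde{U}_n]^* = \bar{k}^*$, so Hilbert~90 gives $\HH^1(k, \bar{k}^*) = 0$ and by definition $\HH^2(k, \bar{k}^*) = \Br k$. By Lemma~\ref{lem:Pic}, $\Pic \widetilde{U}_{n,\bar{k}} \cong \ZZ$, and since the generator $L_{1,2}$ is already defined over $k$, the Galois action is trivial; thus $\HH^1(k, \Pic \widetilde{U}_{n,\bar{k}}) = \Hom_{\mathrm{cts}}(\Gal_k, \ZZ) = 0$ and $(\Pic \widetilde{U}_{n,\bar{k}})^{\Gal_k} = \ZZ$.

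Plugging these values into the five-term sequence reduces it to
\[ 0 \to \Pic \widetilde{U}_n \to \ZZ \to \Br k \to \Br_1 \widetilde{U}_n \to 0. \]
Because $L_{1,2} \in \Pic \widetilde{U}_n$ maps to the generator of $\Pic \widetilde{U}_{n,\bar{k}} = \ZZ$, the first nontrivial map is surjective, so the connecting homomorphism $\ZZ \to \Br k$ is zero and $\Br k \to \Br_1 \widetilde{U}_n$ is both injective and surjective.

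There is no real obstacle here: the argument is a direct unwinding of Hochschild--Serre using the explicit inputs already computed. The one point worth double-checking is the claim that the restriction $\Pic \widetilde{U}_n \to (\Pic \widetilde{U}_{n,\bar{k}})^{\Gal_k}$ is surjective, but this follows immediately from the fact that $L_{1,2}$ is cut out by the $k$-rational equations $y_1 = y_2 = 0$.
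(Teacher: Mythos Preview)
Your proof is correct and follows essentially the same route as the paper: both invoke the Hochschild--Serre spectral sequence together with Lemma~\ref{lem:constants} and Lemma~\ref{lem:Pic} to conclude that $\Br_1 \widetilde{U}_n/\Br k$ injects into $\HH^1(k,\Pic \widetilde{U}_{n,\bar k}) = \HH^1(k,\ZZ) = 0$. You simply spell out the full low-degree exact sequence and verify the injectivity of $\Br k \to \Br_1 \widetilde{U}_n$ via the surjectivity of $\Pic \widetilde{U}_n \to (\Pic \widetilde{U}_{n,\bar k})^{\Gal_k}$, whereas the paper states the injection without further comment (it is in any case automatic from the existence of a $k$-point on $\widetilde{U}_n$).
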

\begin{proof}
	Lemma \ref{lem:constants} and the Hochschild--Serre spectral sequence
	give an injection $\Br_1 \widetilde{U}_n/\Br k \hookrightarrow
	 \HH^1(k, \Pic \widetilde{U}_{n,\bar{k}})$.
	But $\Pic \widetilde{U}_{n,\bar{k}} = \ZZ$ with trivial Galois action
	by Lemma \ref{lem:Pic}, hence
	this Galois cohomology group is trivial.
\end{proof}

We now find the Galois action on the Brauer group. We denote by 	$\QQ/\ZZ(-1):=\Hom(\mu(\bar{k}), \QQ/\ZZ)$, and refer to \cite[\S2.5]{GS} for background on cyclic algebras.

\begin{proposition} \label{prop:Br_Galois}
	The natural map
	$ \Br \widetilde{U}_{n,\bar{k}} \to \Br V_{n,\bar{k}},$
	induced by the inclusion $V_n \subset \widetilde{U}_n$,
	is an isomorphism. 	In particular 
	$\Br \widetilde{U}_{n,\bar{k}} \cong
	\QQ/\ZZ(-1)$
	as a Galois module, and its elements are represented 
	by the cyclic algebras
	\[(u_1/u_3, u_2/u_3)_\zeta, \quad \zeta \in \mu(\bar{k}).\]
\end{proposition}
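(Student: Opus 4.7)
The plan is to reduce from $\widetilde{U}_{n,\bar{k}}$ to $V_{n,\bar{k}}$ via Grothendieck purity, compute the resulting Brauer group of $V_n \cong \Gm^2$ as a Galois module, and exhibit the cyclic algebras as generators.

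First, I would apply the purity/localization sequence on the smooth surface $\widetilde{U}_{n,\bar{k}}$ for the smooth divisor $L_{1,2} \sqcup L_{2,3} \sqcup L_{3,1}$ (disjoint by \eqref{eqn:disjoint}):
\[ 0 \to \Br \widetilde{U}_{n,\bar{k}} \to \Br V_{n,\bar{k}} \to \bigoplus_{i\neq j} \HH^1(L_{i,j,\bar{k}}, \QQ/\ZZ). \]
Direct inspection of the equations in \S\ref{sec:desing} shows each $L_{i,j}$ is isomorphic to $\AA^1_{\bar{k}}$, whose \'etale $\HH^1$ with $\QQ/\ZZ$-coefficients vanishes in characteristic zero. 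Hence the first map is an isomorphism.

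Next, using Lemma \ref{lem:Gm}, I would compute $\Br V_{n,\bar{k}} = \Br \Gm^2_{\bar{k}}$. The Kummer sequence together with $\Pic \Gm^2 = 0$ yields $\Br \Gm^2_{\bar{k}}[n] \cong \HH^2(\Gm^2_{\bar{k}}, \mu_n)$, and K\"unneth combined with the canonical identification $\HH^1(\Gm_{\bar{k}}, \mu_n) \cong \ZZ/n$ (trivial Galois action, coming from $\pi_1(\Gm_{\bar{k}}) \cong \hat{\ZZ}(1)$) gives $\HH^2(\Gm^2_{\bar{k}}, \mu_n) \cong \Hom(\mu_n, \ZZ/n)$. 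Passing to the colimit in $n$ yields $\Br V_{n,\bar{k}} \cong \QQ/\ZZ(-1)$ as a Galois module.

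For the generators, since $\Br \Gm^2_{\bar{k}}[n]$ is cyclic of order $n$ it suffices to verify that the cyclic algebra $(y_1/y_3, y_2/y_3)_{\zeta_n}$ is nonzero in $\Br V_{n,\bar{k}}$ for a primitive $n$-th root of unity $\zeta_n$. This can be shown by a residue computation in the compactification $V_n \hookrightarrow \PP^1 \times \PP^1$: the residue along the divisor $\{y_1/y_3 = 0\}$ is the class of $y_2/y_3$ in $\bar{k}(y_2/y_3)^*/n$, which is nontrivial for $n \geq 2$. Using \eqref{eqn:div} to rewrite $y_i/y_j = u_i/u_j$ puts the cyclic algebra in the stated form and, as $\zeta$ ranges over $\mu(\bar{k})$, exhausts all Brauer classes.

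The main obstacle will be carefully tracking Tate twists throughout the computation of step two so that the Galois module structure of $\Br V_{n,\bar{k}}$ is correctly pinned down as $\QQ/\ZZ(-1)$ and not $\QQ/\ZZ(1)$ or the trivially twisted $\QQ/\ZZ$; the crux is the canonical Galois-equivariant identification $\pi_1(\Gm_{\bar{k}}) \cong \hat{\ZZ}(1)$, which introduces the negative twist upon passage to $\mu_n$-coefficients in cohomology.
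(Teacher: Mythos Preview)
Your proposal is correct and follows essentially the same route as the paper: Grothendieck purity along the three boundary lines $L_{i,j}\cong\AA^1_{\bar{k}}$ (whose simple connectedness kills the residues), together with the identification $\Br\Gm^2_{\bar{k}}\cong\QQ/\ZZ(-1)$ generated by the cyclic algebras in the torus coordinates. The only difference is that the paper outsources the $\Br\Gm^2_{\bar{k}}$ computation and its Galois twist to a reference, whereas you sketch it via Kummer and K\"unneth; your final paragraph correctly flags the one delicate point in that sketch.
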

\begin{proof}
	The explicit description of $\Br V_{n,\bar{k}}$ follows from 
	Lemma \ref{lem:Gm} and the fact that 
	$\Br \mathbb{G}_{m,\bar{k}}^2 \cong \QQ/\ZZ(-1)$, 
	given by the stated cyclic algebras (see \cite[\S8.1]{CTS20}
	-- note that for $\sigma \in \Gal(\bar{k}/k)$ we have
	$\sigma(\alpha_\zeta) = \alpha_{\sigma(\zeta)}$, but 
	for $\zeta$ an $n$th root of unity and 
	 $a \in (\ZZ/n\ZZ)^*$ we have $a \alpha_\zeta = \alpha_{\zeta^{a^{-1}}}$).

	So let $b = (u_1/u_3, u_2/u_3)_\zeta$. It suffices to show
	that $b$ is unramified along the boundary \eqref{eqn:disjoint}.
	The $L_{i,j}$ are regular
	and disjoint, hence Grothendieck's purity theorem \cite[Cor.~6.2]{grot}
	yields the exact sequence
	\[0 \to \Br \widetilde{U}_{n,\bar{k}} \to \Br V_{n,\bar{k}}
	\to \bigoplus_{i \neq j} \HH^1(L_{i,j,\bar{k}},\QQ/\ZZ)\]
	where the last map is the residues along the $L_{i,j,\bar{k}}$.
	(Note that the hypothesis that the boundary divisor be regular is missing from Grothendieck's statement, but it holds in our case.)
	However $L_{i,j,\bar{k}} \cong \mathbb{A}^1_{\bar{k}}$
	is simply connected,
    so the corresponding residues are trivial. The result follows.
\end{proof}

We next show that every Galois-invariant element of $\Br \widetilde{U}_{n,\bar{k}}$ in fact descends to the ground field $k$. To do this, we make use of the relation
\begin{equation} \label{eqn:awesome}
	-\frac{u_i}{u_j} =
	\frac{1}{1 + u_j/u_k - 4u_j/n}, \quad \{i,j,k\} = \{1,2,3\},
\end{equation}
derived from \eqref{eqn:ES}. (This relation will also appear in other parts of the paper).

\begin{proposition} \label{prop:descend}
	The natural map 
	$ \Br \widetilde{U}_{n} \to (\Br \widetilde{U}_{n,\bar{k}})^{\Gal(\bar{k}/k)}$
	is surjective. A complete set of representatives for 
	the elements of $\Br \widetilde{U}_{n}/ \Br k$ is given
	by the cyclic algebras
	\[\alpha_\zeta = (-u_1/u_3,-u_2/u_3)_\zeta, \quad \zeta \in \mu(k).\]
	These algebras have the following equivalent representations:
	\begin{align*}
	\alpha_\zeta & = (-u_i/u_k, -u_j/u_k)_\zeta  = (-y_i/y_k, -y_j/y_k)_\zeta,
	\quad \{i,j,k\} = \{1,2,3\}.
	\end{align*}
\end{proposition}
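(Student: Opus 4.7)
The plan is to first show that each $\alpha_\zeta$ is unramified on $\widetilde{U}_n$, hence lies in $\Br \widetilde{U}_n$; then to use the Hochschild--Serre spectral sequence together with the earlier computation $\Br_1 \widetilde{U}_n = \Br k$ to pin down $\Br \widetilde{U}_n/\Br k$; and finally to derive the equivalent expressions from the defining equation.

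For unramification, $\alpha_\zeta$ differs from the class $(u_1/u_3, u_2/u_3)_\zeta$ of Proposition \ref{prop:Br_Galois} only by signs, so it is already unramified on $V_n$; by purity it remains only to compute the residue along each of the three boundary curves \eqref{eqn:disjoint}. By the $S_3$-symmetry of the $u_i$, I treat $L_{1,2}$ only, where $u_1$ is a local uniformiser and $v(u_1) = v(u_2) = 1$, $v(u_3) = 0$. The tame-symbol formula reduces the residue to $-u_1/u_2 \bmod u_1$. Writing $u_2 = u_1 f$ and substituting into \eqref{eqn:ES} forces $f \equiv -1 \pmod{u_1}$, so the tame symbol equals $1$. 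This pins down the role of the sign twist in $\alpha_\zeta$: without it, the tame symbol would equal $-1$, which is trivial over $\bar k$ but not in general over $k$.

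The Hochschild--Serre spectral sequence combined with $\Br_1 \widetilde{U}_n = \Br k$ yields an injection $\Br \widetilde{U}_n/\Br k \hookrightarrow (\Br \widetilde{U}_{n,\bar{k}})^{\Gal(\bar{k}/k)}$, so distinct $\alpha_\zeta$ give distinct classes modulo $\Br k$. Over $\bar k$ the image of $\alpha_\zeta$ equals $(u_1/u_3, u_2/u_3)_\zeta$, since $-1$ is an $n$th power in $\bar k$ for all relevant $n$. Under the identification $\Br \widetilde{U}_{n,\bar k} \cong \QQ/\ZZ(-1)$ from Proposition \ref{prop:Br_Galois}, the natural evaluation pairing $\mu_n \otimes \Br \widetilde{U}_{n,\bar k}[n] \to \ZZ/n$ is perfect, so its $\Gal(\bar{k}/k)$-invariants form a group of the same cardinality as $\mu_n(k)$; the map $\mu(k) \to (\Br \widetilde{U}_{n,\bar k})^{\Gal(\bar{k}/k)}$ sending $\zeta \mapsto (u_1/u_3, u_2/u_3)_\zeta$ is thus a bijection. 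Combined with the injectivity above, this yields simultaneously the surjectivity of $\Br \widetilde{U}_n \to (\Br \widetilde{U}_{n,\bar k})^{\Gal(\bar{k}/k)}$ and the completeness of $\{\alpha_\zeta : \zeta \in \mu(k)\}$ as representatives.

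For the equivalent forms, the identity $-u_i/u_j = 1/(1 + u_j/u_k - 4 u_j/n)$ of \eqref{eqn:awesome} together with the Steinberg relation $(x, 1-x)_\zeta = 0$ in cyclic algebras give a short manipulation converting $\alpha_\zeta$ into each of the $(-u_i/u_k, -u_j/u_k)_\zeta$ modulo $\Br k$, and the replacement $u_i/u_j = y_i/y_j$ from \eqref{eqn:div} then produces the $y$-variable variants. The main obstacle in the whole argument is the residue calculation: it hinges on the congruence $u_2 \equiv -u_1 \pmod{u_1^2}$ along $L_{1,2}$, a non-obvious consequence of \eqref{eqn:ES} that is exactly what the sign twist in $\alpha_\zeta$ is designed to cancel.
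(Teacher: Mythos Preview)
Your overall strategy matches the paper's, but there is a genuine circularity in the order of your argument.  You invoke ``the $S_3$-symmetry of the $u_i$'' to reduce the residue computation to the single line $L_{1,2}$; however the expression $\alpha_\zeta=(-u_1/u_3,-u_2/u_3)_\zeta$ is not manifestly $S_3$-invariant.  The $S_3$-invariance of $\alpha_\zeta$ is precisely the ``equivalent representations'' part of the statement, which you only address in your final paragraph.  The paper avoids this circularity by reversing the order: it first establishes the equalities $\alpha_\zeta=(-u_i/u_k,-u_j/u_k)_\zeta$ in $\Br k(\widetilde U_n)$ (using the K-theory identity $(a,b)_\zeta=(-b/a,1/a)_\zeta$, which follows from $(a,-a)_\zeta=1$ and antisymmetry), and only then uses symmetry to reduce the residue check to one line.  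Without this, the three residue computations are genuinely different: along $L_{2,3}$ one has $v(-u_1/u_3)=-1$ and $v(-u_2/u_3)=0$ by \eqref{eqn:div}, so the tame symbol is $-u_2/u_3$ restricted to $L_{2,3}$, not $-u_1/u_2$; your substitution $u_2=u_1 f$ does not apply there.  (The paper handles $L_{2,3}$ via \eqref{eqn:awesome}, which gives $-u_2/u_3\equiv 1$ when $u_3=0$.)

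Two smaller points.  First, for the equivalent representations you invoke \eqref{eqn:awesome} together with Steinberg, but \eqref{eqn:awesome} is not what is used here; the identity $(a,b)_\zeta=(-b/a,1/a)_\zeta$ alone converts $(-u_1/u_3,-u_2/u_3)_\zeta$ directly into $(-u_2/u_1,-u_3/u_1)_\zeta$, and a second application gives the remaining form.  Second, you claim the alternative forms only agree ``modulo $\Br k$'', but the proposition asserts exact equality in $\Br\widetilde U_n$, and the K-theory identity gives this exactly.  The rest of your argument (Hochschild--Serre injection, the bijection $\mu(k)\to(\QQ/\ZZ(-1))^{\Gal(\bar k/k)}$, and the residue computation at $L_{1,2}$ itself) is correct and close to the paper's.
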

\begin{proof}
	By Proposition \ref{prop:Br_Galois}, we have 
	$(\Br \widetilde{U}_{n,\bar{k}})^{\Gal(\bar{k}/k)}
	\cong (\QQ/\ZZ(-1))^{\Gal(\bar{k}/k)}$, and this
	 is (non-canonically) isomorphic to $\mu(k)$ \cite[Lem.~2.4]{CTWX}.
	By Proposition \ref{prop:Br_Galois}, the cyclic algebras 
	$\alpha_\zeta$ therefore give a complete set of representatives
	for the Galois-invariant elements. It thus suffices to show that
	these descend to $k$.

	The different representations are	
	easily checked to hold 	in the Brauer group of 
	the function	field of $U_n$,
	using \eqref{eqn:div} and
	the relation $(a,b)_\zeta = (-b/a,1/a)_\zeta$. 
	To show that $\alpha_\zeta$ is 
	unramified along the $L_{i,j}$, we use~\eqref{eqn:div}.
	By symmetry, it suffices to show that $\alpha_\zeta$ is unramified along 
		$L_{2,3}$. However, by \eqref{eqn:div} and
	standard formulae for residues~\cite[Prop.~7.5.1, Ex.~7.1.5]{GS},
	the residue of $\alpha_\zeta$ along $L_{2,3}$ is
	\[-u_2/u_3 \in k(L_{2,3})^{*}/(k(L_{2,3})^{*})^m,\]
	where $m$ is the order of $\zeta$.
	But using the relation \eqref{eqn:awesome}, we have
	\[-\frac{u_2}{u_3} = \frac{1}{1 + u_3/u_1 - 4u_3/n},\]
	so that the residue is in fact equal to $1$ along $L_{2,3}$
	as $u_3 = 0$ here. This shows that $\alpha_\zeta \in \Br \widetilde{U}_n$,
	as required.
\end{proof}

Note that Proposition \ref{prop:descend} shows that $\Br \widetilde{U}_n/\Br k$ is finite if $k$ is a number field; something which is not a priori obvious.


\begin{corollary} \label{cor:Z/2Z}
	If $k = \QQ$, then
	$\Br \widetilde{U}_n/ \Br \QQ$ is isomorphic to $\ZZ/2\ZZ$
	generated by the class of the quaternion algebra 
	\[\alpha := \alpha_{-1}= (-u_1/u_3,-u_2/u_3).\]
\end{corollary}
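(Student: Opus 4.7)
The plan is to read off the statement directly from Proposition \ref{prop:descend} by specialising to $k = \QQ$, with no further substantive argument needed.

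First I would identify the Galois-invariant piece. By Proposition \ref{prop:Br_Galois} we have $\Br \widetilde{U}_{n,\bar{\QQ}} \cong \QQ/\ZZ(-1)$ as a Galois module, and its invariants are (non-canonically) isomorphic to $\mu(\QQ) = \{\pm 1\} \cong \ZZ/2\ZZ$. So $(\Br \widetilde{U}_{n,\bar{\QQ}})^{\Gal(\bar{\QQ}/\QQ)}$ has exactly two elements.

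Next I would use Proposition \ref{prop:descend}, which asserts that $\Br \widetilde{U}_n \to (\Br \widetilde{U}_{n,\bar{\QQ}})^{\Gal(\bar{\QQ}/\QQ)}$ is surjective and that a complete set of representatives for $\Br \widetilde{U}_n / \Br \QQ$ is given by $\alpha_\zeta = (-u_1/u_3, -u_2/u_3)_\zeta$ as $\zeta$ ranges over $\mu(\QQ) = \{\pm 1\}$. Thus $\Br \widetilde{U}_n / \Br \QQ$ has exactly two classes: the class of $\alpha_1$ (which is the trivial class, as a cyclic algebra with $\zeta = 1$ is split) and the class of $\alpha_{-1}$. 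Since the cyclic algebra $(a,b)_{-1}$ is by definition the quaternion algebra $(a,b)$, the non-trivial generator is precisely $\alpha = (-u_1/u_3, -u_2/u_3)$, and $\Br \widetilde{U}_n / \Br \QQ \cong \ZZ/2\ZZ$ as claimed.

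There is essentially no obstacle; all the real work has been done in Propositions \ref{prop:Br_Galois} and \ref{prop:descend}. The only mild subtlety is checking that $\alpha_{-1}$ is indeed non-trivial modulo $\Br \QQ$, but this follows automatically because the two $\alpha_\zeta$ for $\zeta \in \mu(\QQ)$ form a complete set of \emph{distinct} representatives.
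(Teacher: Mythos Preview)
Your proposal is correct and is exactly the argument the paper intends: the corollary is stated without proof immediately after Proposition~\ref{prop:descend}, and the only content is specialising that proposition to $k=\QQ$, where $\mu(\QQ)=\{\pm 1\}$. The distinctness of the two representatives (equivalently, the non-triviality of $\alpha_{-1}$ modulo $\Br\QQ$) is already built into the statement of Proposition~\ref{prop:descend} via the earlier lemma $\Br_1\widetilde{U}_n=\Br k$, so no further check is needed.
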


\begin{remark}
	Note that the ``obvious'' Galois-invariant element 
	$(u_1/u_3,u_2/u_3)$ does not descend to $\QQ$. Despite
	being unramified over $\bar{\QQ}$, it ramifies over the lines
	$L_{i,j}$ with constant (non-trivial) residue. 
	We have multiplied this element by some ramified \emph{algebraic} Brauer
	group elements to kill these constant residues.
\end{remark}

\subsubsection{Brauer group of $U_n$}

We  calculated the Brauer group of the desingularisation $\widetilde{U}_n$ using Grothendieck's purity theorem. This method uses that $\widetilde{U}_n$ is smooth and does not apply directly to $U_n$. To calculate $\Br U_n$ we shall use Theorem \ref{thm:rational}, which we now prove. 

\begin{proof}[Proof of Theorem \ref{thm:rational}]
	We compute the higher direct images $\R^q f_* \Gm$ with respect to the \'etale topology and use the Leray spectral sequence for the morphism $f$ and the sheaf $\Gm$; the necessary material can be found in~\cite[\S III, \S IV]{lipman}.

	Let $P_1, \dotsc, P_r$ be the closed points at which $U$ is singular,
	with residue fields $k_j=\kappa(P_j)$, and let $E_j$
	be the exceptional divisor above $P_j$.
	Let $\bar{P}_j$ be a geometric point  above $P_j$, and let $\bar{E}_j$ the fibre above $\bar{P}_j$.
	By~\cite[Prop.~1]{artin}, $\bar{E}_j$ is a tree of $\PP^1$s.
	By~\cite[Prop.~11.1]{lipman}, $\Pic \bar{E}_j$ is isomorphic to $\ZZ^{d_j}$, where $d_j$ is the number of irreducible components of $\bar{E}_j$, with the absolute Galois group of $k_j$ permuting the factors as it permutes the irreducible components.
	
	Let $\OO_{U,P_j}^\textrm{sh}$ be a strict Henselisation of the local ring of $U$ at $P_j$.  
	The standard calculation of the stalks of higher direct images shows that $(\R^1 f_* \Gm)_{\bar{P}_j}$ is isomorphic to $\Pic(\widetilde{U} \times_U \Spec \OO_{U,P_j}^\textrm{sh})$.
	The natural map $\Pic(\widetilde{U} \times_U \Spec \OO_{U,P_j}^\textrm{sh}) \to \Pic \bar{E}_j$ is injective by~\cite[Thm.~12.1]{lipman} and surjective by~\cite[Lem.~14.3]{lipman}, so is an isomorphism.
	We deduce that  $(\R^1 f_* \Gm)_{\bar{P}_j}$ and $\Pic \bar{E}_j$ are isomorphic as Galois modules over $k_j$.
	Let $i_j: P_j \to U$ be the inclusion.  Given that $\R^1 f_* \Gm$ is supported at the points $P_j$, we have computed
	\begin{equation}\label{eq:R1}
	\R^1 f_* \Gm \cong \prod_j (i_j)_* \Pic \bar{E}_j.
	\end{equation}
	It follows that
	\begin{equation}\label{eq:H1R1}
	\HH^1(U, \R^1 f_* \Gm) = \prod_j \HH^1(k_j, \Pic \bar{E}_j) = 0,
	\end{equation}
	since $\Pic \bar{E}_j$ is an induced module.

	We now show that the stalks $(\R^2 f_* \Gm)_{\bar{P}_j}$ are torsion-free.
	The Kummer sequence on $\widetilde{U}$ gives, for any $m \ge 1$, an exact sequence
	\[
	\R^1 f_* \Gm \xrightarrow{\times m} \R^1 f_* \Gm \to
	\R^2 f_* \mmu_m \to \R^2 f_* \Gm \xrightarrow{\times m} \R^2 f_* \Gm.
	\]
	Proper base change \cite[Cor.~VI.2.7]{milne} shows
	\[
	(\R^2 f_* \mmu_m)_{\bar{P}_j} \cong \HH^2(\bar{E}_j, \mmu_m) \cong \Pic \bar{E}_j/m\Pic \bar{E}_j
	\] where the last isomorphism follows from the 
	Kummer sequence of $\bar{E}_j$, as $\Br \bar{E}_j = 0$ 	by
	\cite[Cor.~1.2]{grot}. 
	Therefore  $(\R^1 f_* \Gm)_{\bar{P}_j}$ surjects onto $(\R^2 f_* \mmu_m)_{\bar{P}_j}$ by \eqref{eq:R1}, showing
	 that $(\R^2 f_* \Gm)_{\bar{P}}$ has no non-trivial $m$-torsion.
	
	Using~\eqref{eq:R1} and~\eqref{eq:H1R1}, the Leray spectral sequence for the morphism $f$ and the sheaf $\Gm$ now gives an exact sequence
	\begin{equation} \label{eqn:Leray}
	\Pic U \to \Pic \widetilde{U} \to \prod_j \HH^0(k_j,\Pic \bar{E}_j)
	\to \Br U \to \Br \widetilde{U} \to \HH^0(U,\R^2 f_* \Gm).
	\end{equation}
	Since $\widetilde{U}$ is regular, $\Br \widetilde{U}$ is a subgroup of $\Br k(\widetilde{U})$ and is therefore torsion. 
	Thus the rightmost arrow is zero. 
	This proves that $\Br U \to \Br \widetilde{U}$ is surjective.
\end{proof}

In the case of Erd\H{o}s--Straus surfaces, we obtain the following stronger result.

\begin{corollary} \label{cor:Br}
	The natural map
	$\Br U_n \to \Br \widetilde{U}_n$
	is an isomorphism.
\end{corollary}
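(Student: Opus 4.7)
The plan is to combine Theorem~\ref{thm:rational} with the finer information contained in its proof. Since $U_n$ has a single singularity at $P = (0,0,0)$ of type $\mathbf{A}_1$, which is a rational singularity, Theorem~\ref{thm:rational} gives immediately that $\Br U_n \to \Br \widetilde{U}_n$ is surjective. So the only remaining work is to establish injectivity.

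For injectivity I would extract from the Leray sequence \eqref{eqn:Leray} the four-term exact sequence
\[
\Pic \widetilde{U}_n \to \HH^0(k, \Pic \bar{E}) \to \Br U_n \to \Br \widetilde{U}_n,
\]
so that injectivity of the right-hand map is equivalent to surjectivity of the left-hand one. To identify the middle group, I would observe that the geometric exceptional divisor $\bar{E}$ is the projectivised tangent cone $y_1 y_2 + y_1 y_3 + y_2 y_3 = 0 \subset \PP^2_{\bar{k}}$, which is a smooth conic and hence a form of $\PP^1$; consequently $\Pic \bar{E} \cong \ZZ$ with trivial Galois action, and $\HH^0(k, \Pic \bar{E}) = \ZZ$.

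It then remains to check that the restriction map $\Pic \widetilde{U}_n \to \Pic E$ is surjective. By Lemma~\ref{lem:Pic}, $\Pic \widetilde{U}_n$ is generated by the class of $L_{1,2}$, so the whole question reduces to the intersection number $L_{1,2} \cdot E$. The line $L_{1,2}$ is smooth and passes through $P$ with tangent direction $[0:0:1]$, which does lie on the tangent cone; one checks in the blow-up chart $y_3 = 1$ (with coordinates $u_3, y_1, y_2$ and relations $u_1 = u_3 y_1$, $u_2 = u_3 y_2$) that the strict transform of $L_{1,2}$ is cut out by $y_1 = y_2 = 0$ while $E$ is cut out by $u_3 = 0$, and a quick tangent-space computation at the origin shows they meet transversely at a single point. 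Hence $L_{1,2} \cdot E = 1$, the restriction map is surjective, and injectivity follows.

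The proof is essentially a formal consequence of Theorem~\ref{thm:rational} combined with Lemma~\ref{lem:Pic}; the only genuinely new input is the intersection number $L_{1,2} \cdot E = 1$, which is the main (and quite minor) obstacle. One could equally well bypass the coordinate computation by the conceptual remark that a line on a cubic surface passing smoothly through a node is Cartier there and its strict transform necessarily meets the exceptional $(-2)$-curve of the minimal resolution transversely in one point.
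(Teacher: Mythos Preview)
Your proof is correct and follows essentially the same route as the paper: invoke Theorem~\ref{thm:rational} for surjectivity, then use the Leray exact sequence~\eqref{eqn:Leray} to reduce injectivity to surjectivity of $\Pic \widetilde{U}_n \to \Pic E$, which holds because $L_{1,2}\cdot E = 1$. You supply more detail than the paper (the explicit identification of $E$ as a smooth conic and the coordinate computation of the intersection number), but the argument is the same.
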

\begin{proof}

	By Theorem \ref{thm:rational}, it suffices to show
	that the stated map is injective. The exact sequence \eqref{eqn:Leray}
	here reads 
	\[
	\Pic U_n \to \Pic \widetilde{U}_n \to \Pic E
	\to \Br U_n \to \Br \widetilde{U}_n \to 0.
	\]
	But $\Pic \widetilde{U}_n \to \Pic E$ is surjective as the strict transform of $L_{1,2}$ has intersection number $1$ with the exceptional divisor $E$. This completes the proof.
\end{proof}

Corollaries \ref{cor:Z/2Z} and \ref{cor:Br} in particular prove Theorem \ref{thm:Brauer}.

\begin{remark}
	The map in Theorem \ref{thm:rational} need not be an isomorphism
	in general. If $X$ is the Cayley cubic surface in $\PP^3_\CC$,
	then $\Br X \cong \ZZ/2\ZZ$ \cite[Tab.~2]{Bri13},
	but the Brauer group of the desingularisation
	is clearly trivial.
\end{remark}

\begin{remark} \label{rem:smooth_points}
	We have calculated $\Br U_n$ for completeness; however, we could
	just have chosen to  work on the desingularisation instead.
	Namely, consider the Brauer group element 
	$\alpha \in \Br \widetilde{U}_n$. Restricting $\alpha$ to the exceptional
	divisor $E \cong \PP^1_\QQ$, we find that $\alpha$ is constant
	along $E$ as $\Br \PP^1_{\QQ} = \Br \QQ$ (in fact 
	our choice of $\alpha$ is even trivial along $E$).
	Therefore, we could have chosen to instead \emph{define}
	\[U_n(\Adele_\QQ)^{\Br} := \im(\widetilde{U}_n(\Adele_\QQ)^{\Br} 
	\to U_n(\Adele_\QQ)).\]
	as pairing with  $\alpha$ is independent of the choice
	of lift of adelic point from $U_n$ to $\widetilde{U}_n$.
	This is essentially the approach advocated in \cite[\S 8]{CTX13} 
	for dealing with the Brauer--Manin obstruction on singular varieties.
	(Note that in our case the smooth points are dense in $U_n(\QQ_v)$
	for all $v$, so $U_n(\QQ_v) = U_n(\QQ_v)_{\mathrm{cent}}$ in the notation
	of \emph{loc.~cit.})
\end{remark}

\section{Brauer--Manin obstruction} \label{sec:Br}

We now study the integral Brauer--Manin obstruction in our case over $\QQ$. Let $n \in \NN$.

\subsection{Local invariants} 
We begin by calculating the local invariants of the element $\alpha = (-u_1/u_3,-u_2/u_3)$, which we view as an element of $\Br U_n$. We take the convention that the local invariants lie in $\mu_2$, rather than $\ZZ/2\ZZ$. Thus for a place $v$ of $\QQ$ the local invariant map is given by the Hilbert symbol
\begin{equation} \label{eqn:Hilbert}
\inv_v \alpha : U_n(\QQ_v) \to \{\pm 1\}, \quad (u_1,u_2,u_3) \mapsto
(-u_1/u_3, -u_2/u_3)_v.
\end{equation}
The stated expression is only well-defined if $u_1u_2u_3 \neq 0$; for other points, one can reduce to the above case as  the local invariant is continuous \cite[Prop.~8.2.9]{Poo17}.
Indeed, it follows from the implicit function theorem that the $\QQ_v$-points of any dense Zariski-open subset are dense in the smooth points of $U_n(\QQ_v)$; and, as noted in Remark \ref{rem:smooth_points}, the smooth points are dense in $U_n(\QQ_v)$.

\subsection{Real points}

\begin{lemma} \label{lem:real}
	Let
	\[U_n(\RR)_+ = \{ \mathbf{u} \in U_n(\RR) : u_i > 0 \mbox{ for all } i\},
	\quad U_n(\RR)_- = U_n(\RR) \setminus U_n(\RR)_+.\]
	Then the $U_n(\RR)_+$ and $U_n(\RR)_-$ are both connected and
	\[U_n(\RR) = U_n(\RR)_+ \sqcup U_n(\RR)_-, \quad
	\inv_\infty \alpha(U_n(\RR)_+) = -1, \quad \inv_\infty  \alpha(U_n(\RR)_-) = 1.\]
\end{lemma}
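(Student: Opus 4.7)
The plan is to prove, in order: (i) $U_n(\RR)_+$ is clopen in $U_n(\RR)$, so both it and its complement $U_n(\RR)_-$ are unions of connected components; (ii) each of $U_n(\RR)_+$ and $U_n(\RR)_-$ is connected; and (iii) the local invariant $\inv_\infty \alpha$ takes the asserted value on each. For step (i), openness of $U_n(\RR)_+$ is immediate from the strict inequalities. For closedness, I would rewrite the defining equation on $\{u_1 u_2 u_3 \neq 0\}$ as $1/u_1 + 1/u_2 + 1/u_3 = 4/n$: a sequence in $U_n(\RR)_+$ with some $u_i^{(k)} \to 0^+$ would force $1/u_i^{(k)} \to +\infty$ while the remaining terms stay non-negative, contradicting the bounded sum. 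Connectedness of $U_n(\RR)_+$ then follows from the explicit homeomorphism $\bu \mapsto (u_1/u_3, u_2/u_3)$ onto the open quadrant $\RR_{>0}^2$, with continuous inverse $(a,b) \mapsto (au_3, bu_3, u_3)$ where $u_3 = n(ab+a+b)/(4ab)$.

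The heart of the argument is connectedness of $U_n(\RR)_-$. Here I would use the parametrisation from \S \ref{sec:desing}: the composition $V_n \cong \Gm^2 \hookrightarrow \widetilde{U}_n \to U_n$ contracts the conic $C = \{ab + a + b = 0\}$ (in coordinates $(a,b) = (y_1/y_3, y_2/y_3) = (u_1/u_3, u_2/u_3)$) to the singular point $P$ and is an isomorphism onto $U_n^0 := U_n \setminus \{u_1 u_2 u_3 = 0\}$ elsewhere. A direct check of $b = -a/(a+1)$ shows that $C(\RR)$ meets each of the three ``mixed-sign'' quadrants of $\Gm^2(\RR)$, so each such quadrant maps continuously to a connected subset of $U_n(\RR)_-$ containing $P$; their union is therefore connected and equals $(U_n(\RR)_- \cap U_n^0) \cup \{P\}$. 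To conclude, I would show that each coordinate line $L_{i,j}(\RR)$ lies in the closure of this set: near a point $(0,0,c) \in L_{1,2}(\RR)$ with $c \neq n/4$, the fibre of $U_n$ over $u_3 = c$ is the hyperbola $(u_1 - k)(u_2 - k) = k^2$ with $k = nc/(4c-n)$, whose tangent at the origin is $u_1 + u_2 = 0$, producing approximating sequences with $u_1, u_2$ of opposite sign (and the $c = n/4$ case is easier, the fibre being the line $u_1 + u_2 = 0$).

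For step (iii), continuity of $\inv_\infty \alpha$ into the discrete group $\{\pm 1\}$ forces it to be constant on each component, so it is enough to evaluate on one representative: on $U_n(\RR)_+$ all coordinates are positive, giving $(-u_1/u_3, -u_2/u_3)_\infty = (-,-)_\infty = -1$, while any point of $U_n(\RR)_-$ with, say, $u_1 < 0$ and $u_2, u_3 > 0$ gives $(-u_1/u_3, -u_2/u_3)_\infty = (+,-)_\infty = +1$. The main obstacle will be the topology of $U_n(\RR)_-$ around $P$: within a single mixed quadrant of $\Gm^2(\RR)$ the sign of $u_3 = n(ab+a+b)/(4ab)$ flips across $C$, so two different sign patterns of $\bu$ are linked through $P$ inside that one quadrant, and one must track carefully that the attachment of the three lines $L_{i,j}(\RR)$ along the hyperbolic fibres preserves connectedness rather than splitting the space.
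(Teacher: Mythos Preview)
Your argument is correct, and it takes a genuinely different route from the paper. The paper projects $U_n(\RR)$ to the $(u_1,u_2)$-plane and observes that $u_3 = -u_1u_2/(u_1+u_2-4u_1u_2/n)$ is uniquely determined off the hyperbola $H\colon u_1+u_2-4u_1u_2/n=0$, while over the origin the fibre is the whole line $L_{1,2}(\RR)$; since one branch of $H$ passes through the origin, $\RR^2\setminus(H\setminus\{(0,0)\})$ has exactly two components, and a short fibre argument then gives two components for $U_n(\RR)$. Your approach instead separates out the clopen-ness of $U_n(\RR)_+$ (via the reciprocal form $\sum 1/u_i = 4/n$) and then reuses the $\Gm^2$-parametrisation of \S\ref{sec:desing}, gluing the three mixed-sign quadrants through the singular point $P$. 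This makes the role of $P$ and of the blown-up geometry more transparent, at the cost of more bookkeeping; the paper's single-projection argument is shorter but hides why the two pieces reconnect.

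One simplification you can make: your hyperbola/tangent-line computation for attaching the lines $L_{i,j}(\RR)$ to $(U_n(\RR)_-\cap U_n^0)\cup\{P\}$ is unnecessary. Each $L_{i,j}(\RR)\cong\RR$ is already connected and contains $P$, which lies in your connected set, so the union is connected automatically. The fibre analysis near $(0,0,c)$ is correct but not needed.
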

\begin{proof}
We first show that $U_n(\RR)$ has two connected components.
Consider 
\begin{equation} \label{eqn:R}
	U_n(\RR) \to \RR^2, \quad \bu \mapsto (u_1,u_2).
\end{equation}
This map is not surjective; indeed, we rearrange the equation \eqref{eqn:awesome} to obtain
\[u_3 = \frac{-u_1u_2}{u_1 + u_2 - 4u_1u_2/n}.\]
So the image misses every point on the hyperbola $u_1 + u_2 - 4u_1u_2/n = 0$, except the origin which is the image of the line $L_{1,2}$. The hyperbola splits the plane into $3$ components, but one branch passes through the origin and hence the image of \eqref{eqn:R} has $2$ components.  The fibres of \eqref{eqn:R} are connected, being a single point or $\RR$ over the origin. Hence $U_n(\RR)$ has $2$ connected components. These are  easily checked to be the two components stated in the lemma. The local invariants are then calculated by standard formulae for Hilbert symbols.
\end{proof}

\subsection{$p$-adic points}

\subsubsection{Preliminaries}

\begin{lemma} \label{lem:p-adic}
	Let $p$ be an odd prime with $v_p(n) \leq 1$
	and $\bu \in \mathcal{U}_n(\ZZ_p)$ with $u_1u_2u_3 \neq 0$.
	Then there exists $i\neq j$ such that $u_i/u_j \in \ZZ_p^*$.
\end{lemma}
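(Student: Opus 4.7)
The plan is to compare $p$-adic valuations on both sides of the defining equation
\[4u_1u_2u_3 = n(u_1u_2 + u_1u_3 + u_2u_3),\]
and argue by contradiction. Set $a_i = v_p(u_i)$, which is a non-negative integer since $\bu \in \mathcal{U}_n(\ZZ_p)$ and is finite since $u_i \neq 0$. I aim to show that two of the $a_i$ must coincide; it then follows immediately that the corresponding $u_i/u_j$ has valuation $0$, hence lies in $\ZZ_p^*$.

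Assume for contradiction that the $a_i$ are pairwise distinct, and after relabelling order them as $a_1 < a_2 < a_3$. Since $p$ is odd, $v_p(4) = 0$, so the left-hand side has valuation $a_1 + a_2 + a_3$. On the right-hand side, the three summands $u_1u_2, u_1u_3, u_2u_3$ have valuations $a_1+a_2 < a_1+a_3 < a_2+a_3$; since the minimum is attained uniquely, the sum has valuation exactly $a_1+a_2$, and the right-hand side therefore has valuation $v_p(n) + a_1 + a_2$. Equating the two sides gives
\[a_3 = v_p(n) \leq 1\]
by the hypothesis on $n$.

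But the $a_i$ are non-negative integers with $0 \leq a_1 < a_2 < a_3 \leq 1$, which is impossible — three distinct non-negative integers cannot fit inside $\{0,1\}$. This contradicts the assumption that the $a_i$ are all distinct, so there exist $i \neq j$ with $a_i = a_j$, and then $u_i/u_j \in \ZZ_p^*$ as required.

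There is no serious obstacle here; the argument is a clean ultrametric valuation calculation. The only point where the hypotheses are genuinely used are (i) $p$ odd, to kill the factor $4$; (ii) $v_p(n) \leq 1$, to force the forbidden chain $0 \leq a_1 < a_2 < a_3 \leq 1$; and (iii) the strict inequality in the ordering of the $a_i$, which is what allows us to read off the valuation of $u_1u_2+u_1u_3+u_2u_3$ exactly rather than as a lower bound.
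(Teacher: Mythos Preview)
Your proof is correct and follows essentially the same approach as the paper: a $p$-adic valuation comparison on the defining equation, using that the minimum valuation among $u_1u_2,\,u_1u_3,\,u_2u_3$ is uniquely attained when the $v_p(u_i)$ are distinct. Your presentation is in fact slightly tidier than the paper's (you directly obtain $a_3 = v_p(n) \le 1$ and the impossible chain $0 \le a_1 < a_2 < a_3 \le 1$, whereas the paper reaches the same contradiction via a divisibility statement on the unit parts), but the underlying argument is identical.
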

\begin{proof}
	Write $u_i = a_ip^{b_i}$ and $n = n' p^{b}$ where $p \nmid n'a_i$.
	The equation \eqref{def:U_n} becomes
	\[4a_1a_2a_3p^{b_1 + b_2 + b_3} = n'(a_1a_2p^{b_1 + b_2 + b} 
	+ a_1a_3p^{b_1 + b_3 + b}
	+ a_2a_3p^{b_2 + b_3 + b}).\]
	Without loss of generality $0 \leq b_1 \leq b_2 \leq b_3$.
	If $b_2 = 0$, then the result is clear. So 
 	assume for a contradiction that $1 \leq b_2 < b_3$. But as $b \leq 1$,
 	we then have
	\[\min\{b_1 + b_2 + b_3, b_1 + b_3 + b, b_2 + b_3 + b\} > b_1 + b_2 + b.\]
	Thus  $p \mid a_1a_2$, which contradicts the fact that the $a_i$
	are units, as required.
\end{proof}

\begin{remark}
Note that Lemma \ref{lem:p-adic} fails in general if $n$ has a prime divisor with valuation at least $2$. For example, for $n= 9$ we have the solution $(4, 6, 36)$.
\end{remark}

\begin{lemma} \label{lem:bad}
	Let $p$ be an odd prime and let $\bu \in \mathcal{U}_n(\ZZ_p)$
	be such that $u_2/u_3 \in \ZZ_p^*$. Then
	\[\inv_p \alpha(\bu) = \left(\frac{-u_2/u_3}{p}\right)^{v_p(u_1u_3)}.\]
\end{lemma}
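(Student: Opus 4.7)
The plan is to compute the Hilbert symbol $(-u_1/u_3,-u_2/u_3)_p$ directly, using the standard explicit formula at an odd prime together with the hypothesis that $-u_2/u_3$ is a $p$-adic unit.

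First I would verify that the symbol is well-defined, i.e.\ that all three coordinates are nonzero. By assumption $u_2/u_3 \in \ZZ_p^*$, so in particular $u_2, u_3 \in \QQ_p^*$ and $v_p(u_2)=v_p(u_3)$. The coordinate $u_1$ is also nonzero: if $u_1=0$, the equation \eqref{def:U_n} would force $nu_2u_3=0$, contradicting $n\geq 1$ and $u_2,u_3\neq 0$. Hence $\bu$ lies in the open subset where $u_1u_2u_3\neq 0$ and the formula \eqref{eqn:Hilbert} applies literally, without any need to appeal to the continuity argument from the preceding paragraph of \S3.1.

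Next I would apply the classical formula for the Hilbert symbol at an odd prime: for $a\in\QQ_p^*$ and $b\in\ZZ_p^*$,
\[(a,b)_p = \left(\frac{b}{p}\right)^{v_p(a)}.\]
Taking $a=-u_1/u_3$ and $b=-u_2/u_3$ (which is indeed a unit since $-1\in\ZZ_p^*$ and $u_2/u_3\in\ZZ_p^*$) yields
\[\inv_p\alpha(\bu) = \left(\frac{-u_2/u_3}{p}\right)^{v_p(-u_1/u_3)} = \left(\frac{-u_2/u_3}{p}\right)^{v_p(u_1)-v_p(u_3)}.\]

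Finally, I would reduce the exponent modulo $2$. Because the Legendre symbol takes values in $\{\pm 1\}$, only the parity of the exponent matters. Since $v_p(u_2)=v_p(u_3)$, we have
\[v_p(u_1)-v_p(u_3) \equiv v_p(u_1)+v_p(u_3) = v_p(u_1u_3) \pmod{2},\]
and so the expression above equals $\left(\frac{-u_2/u_3}{p}\right)^{v_p(u_1u_3)}$, as required. The argument is essentially a one-line invocation of the formula for the tame Hilbert symbol; there is no serious obstacle, the only delicate point being the bookkeeping that converts $v_p(u_1/u_3)$ into $v_p(u_1u_3)$ using the parity trick, which is why the statement is phrased using $u_1u_3$ rather than $u_1/u_3$.
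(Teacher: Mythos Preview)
Your proof is correct and follows the same approach as the paper, which simply invokes the standard Hilbert symbol formula at an odd prime (Serre, Thm.~III.1); you have merely spelled out the details. One cosmetic remark: the clause ``Since $v_p(u_2)=v_p(u_3)$'' in your final step is irrelevant---the parity reduction uses only $-v_p(u_3)\equiv v_p(u_3)\pmod 2$.
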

\begin{proof}
	As $u_2/u_3 \in \ZZ_p^*$, this follows immediately from 
	\eqref{eqn:Hilbert} and standard formulae for Hilbert symbols
	\cite[Thm.~III.1]{Ser73}.
\end{proof}

\subsubsection{Good primes}

\begin{lemma} \label{lem:good}
	For all $p \nmid 2n$ we have
	$\inv_p \alpha(\sU_n(\ZZ_p)) = 1$.
\end{lemma}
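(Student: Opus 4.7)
The plan is to combine Lemmas~\ref{lem:p-adic} and~\ref{lem:bad} with a short valuation analysis of the defining equation. Since $\inv_p \alpha$ is continuous on $U_n(\QQ_p)$ and the locus where $u_1 u_2 u_3 \neq 0$ is dense in $\sU_n(\ZZ_p)$ (by the density of smooth points recorded in Remark~\ref{rem:smooth_points}), it suffices to verify the statement for $\bu \in \sU_n(\ZZ_p)$ with $u_1 u_2 u_3 \neq 0$.

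Since $p \nmid 2n$ gives $v_p(n) = 0$, Lemma~\ref{lem:p-adic} supplies $i \neq j$ with $u_i/u_j \in \ZZ_p^*$. The $S_3$-invariance of $\alpha$ from Proposition~\ref{prop:descend} allows us to relabel coordinates and assume $u_2/u_3 \in \ZZ_p^*$. Lemma~\ref{lem:bad} then reduces the problem to showing
$$\left(\frac{-u_2/u_3}{p}\right)^{v_p(u_1 u_3)} = 1,$$
i.e.\ that either $v_p(u_1 u_3)$ is even or $-u_2/u_3$ is a square modulo $p$.

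Set $a = v_p(u_1)$ and $b = v_p(u_2) = v_p(u_3)$, and compare $p$-adic valuations on both sides of the equation $4 u_1 u_2 u_3 = n\bigl(u_1(u_2 + u_3) + u_2 u_3\bigr)$, using $v_p(4n) = 0$. The left-hand side has valuation $a + 2b$, while the two summands on the right have valuations $a + b + v_p\bigl((u_2+u_3)/p^b\bigr)$ and $2b$. Comparing these immediately rules out the configurations $a > b$ and $a > 0 = b$. In the surviving cases, either $a = b$, in which case $v_p(u_1 u_3) = 2b$ is even; or $a < b$, in which case matching the valuations forces the two right-hand summands to coincide in valuation and cancel to boost the valuation of their sum by $a$, giving $v_p(u_2 + u_3) = 2b - a \geq b+1$. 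This last inequality yields $-u_2/u_3 \equiv 1 \pmod p$, a square.

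The main technical obstacle is the case $a < b$: one must track the cancellation between $u_1(u_2+u_3)$ and $u_2 u_3$ carefully, and it is precisely the defining equation that pins down $v_p(u_2 + u_3)$ and thereby produces the desired square class.
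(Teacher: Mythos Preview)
Your proof is correct and follows essentially the same route as the paper's: reduce via Lemmas~\ref{lem:p-adic} and~\ref{lem:bad}, dispose of the case $v_p(u_1)=v_p(u_3)$ by parity of the exponent, and in the remaining case show $-u_2/u_3\equiv 1\bmod p$. The only cosmetic difference is that the paper reads off this last congruence from the rearranged identity~\eqref{eqn:awesome}, while you obtain it by a direct valuation comparison on the defining cubic; the two computations are equivalent. (Minor quibble: your phrase ``rules out $a>b$ and $a>0=b$'' is redundant, and when $a=0<b$ one only gets $v_p(u_2+u_3)\ge 2b$ rather than equality, but the needed inequality $v_p(u_2+u_3)\ge b+1$ still follows.)
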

\begin{proof}
	
	By continuity, we may assume that $u_1u_2u_3 \neq 0$.
	(The continuity argument above was stated for $\QQ_p$-points, but the $\ZZ_p$-points form an open set in the $\QQ_p$-points so the argument also holds for $\ZZ_p$-points.)
	Up to permuting coordinates, 
	Lemma \ref{lem:p-adic} gives 
	$u_2/u_3 \in \ZZ_p^*$. If $v_p(u_1) = v_p(u_3)$, then the invariant
	is  $1$ by Lemma \ref{lem:bad}. 
	So assume $v_p(u_1) \neq v_p(u_3)$, so that
	$p \mid u_1u_2u_3$. But from the equation \eqref{def:U_n},
	it is clear that $p$ cannot divide only one of the $u_i$
	since $p \nmid 2n$.
	As $u_2/u_3 \in \ZZ_p^*$ we find that $p \mid u_3$.
	From \eqref{eqn:awesome} we have
	\[-\frac{u_2}{u_3} = \frac{1}{1 + u_3/u_1 - 4u_3/n}.\]
	As $p \mid u_3$, $v_p(u_1) \neq v_p(u_3)$,
	and the left hand side is a $p$-adic unit, 
	we must have $v_p(u_3/u_1) > 0$.
	Thus $-u_2/u_3 \equiv 1 \bmod p,$
	and so the local invariant is again trivial 
	by Lemma \ref{lem:bad}.
\end{proof}

\subsubsection{Bad odd primes}

\begin{lemma} \label{lem:surj}
	Let $p \mid n$ be an odd prime. Then the map
	\[\inv_p \alpha : \mathcal{U}_n(\ZZ_p) \to \{\pm 1\}\]
	is surjective.
\end{lemma}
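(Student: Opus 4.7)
The plan is to exhibit two $\ZZ_p$-solutions of \eqref{def:U_n} realising the two values $\pm 1$ of $\inv_p \alpha$. Let $b = v_p(n) \geq 1$ and write $n = p^b n'$ with $n' \in \ZZ_p^*$. By Proposition~\ref{prop:descend}, I may represent $\alpha$ in any of the symmetric forms $(-u_i/u_k, -u_j/u_k)$, so I shall always pick one whose first entry is a $p$-adic unit and then evaluate the Hilbert symbol via Lemma~\ref{lem:bad} or directly from the odd-prime formula.

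For the first family, I take arbitrary $u_1, u_2 \in \ZZ_p^*$ and solve the equation for $u_3 = n u_1 u_2 / (4 u_1 u_2 - n(u_1+u_2))$. Since $p$ is odd and $p \mid n$, the denominator is $\equiv 4u_1u_2 \pmod{p}$ and hence a unit, so $u_3 \in p^b \ZZ_p$ and $\bu \in \sU_n(\ZZ_p)$. Writing $\alpha = (-u_3/u_2, -u_1/u_2)$ with $u_1/u_2 \in \ZZ_p^*$ as the unit entry, the (symmetric form of) Lemma~\ref{lem:bad} gives $\inv_p \alpha(\bu) = \bigl(\tfrac{-u_1/u_2}{p}\bigr)^b$. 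When $b$ is odd this equals $\bigl(\tfrac{-u_1/u_2}{p}\bigr)$, which realises both $\pm 1$ as $u_1 \in \ZZ_p^*$ varies (with, say, $u_2 = 1$), completing this case.

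When $b$ is even the first family only yields $+1$, so I turn to a second family: fix $u_2 = p$ and let $u_1 \in \ZZ_p^*$ vary. A direct valuation count (using $b \geq 2$) shows that $4p u_1 - n(u_1+p)$ has $p$-adic valuation exactly $1$, with leading term $4pu_1$; solving gives $u_3 = p^b u_3'$ where $u_3' \equiv n'/4 \pmod{p}$, independent of $u_1$. Expanding $(-u_1/u_3, -u_2/u_3)_p$ directly with the Hilbert symbol formula for odd $p$, the sign $(-1)^{b(b-1)(p-1)/2}$ is trivial since $b(b-1)$ is even, and the surviving factor collapses to $\bigl(\tfrac{-u_1 n'/4}{p}\bigr) = \bigl(\tfrac{-u_1 n'}{p}\bigr)$, which again hits both values as $u_1$ varies.

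The principal obstacle is precisely this parity dichotomy: the obvious unit-unit family produces an invariant of the form $\chi^b$ for a Legendre character $\chi$ and hence collapses to $+1$ when $b$ is even, so one must engineer a second family with an odd valuation pattern on the $u_i$. The delicate point is verifying that the chosen $u_2 = p$ actually forces $v_p(u_3) = b$ and that the Hilbert symbol computation, performed on two simultaneously non-unit entries, retains explicit dependence on $u_1$ after the various powers of $p$ and signs are untangled.
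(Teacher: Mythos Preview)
Your proof is correct but follows a somewhat different route from the paper's. The paper first treats the case $p \parallel n$ by substituting $u_1 = p a_1$, reducing the equation modulo $p$ to $(4a_1 - n')u_2 u_3 \equiv 0$, choosing $4a_1 \equiv n' \bmod p$ with $u_2,u_3$ arbitrary units, and lifting via Hensel; this produces points with $v_p(u_1)=1$ and $v_p(u_2)=v_p(u_3)=0$, so Lemma~\ref{lem:bad} gives $\bigl(\tfrac{-u_2/u_3}{p}\bigr)$ directly. For $b>1$ the paper simply rescales such a solution by $p^{b-1}$, which preserves all quotients $u_i/u_j$ and hence the Hilbert symbol. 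Your approach instead solves the equation explicitly for $u_3$ in terms of $u_1,u_2$, avoiding Hensel altogether, but at the cost of a parity split on $b$: your unit--unit family yields invariant $\bigl(\tfrac{-u_1/u_2}{p}\bigr)^{b}$, which collapses to $+1$ when $b$ is even, forcing the second family with $u_2 = p$. The paper's rescaling trick sidesteps this by always landing on an odd exponent, while your argument is more self-contained (no Hensel) but must handle the even-$b$ case by a separate construction.
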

\begin{proof}
	We first consider the case where $p \| n$. Write $n = pn'$
	where $p \nmid n'$ and substitute $u_1 = pa_1$.
	The equation \eqref{def:U_n} becomes
	\[4a_1u_2u_3 = n'(a_1u_2p + a_1u_3p + u_2u_3).\]
	Modulo $p$ this is
	\begin{equation} \label{eqn:planes}
		(4a_1 - n')u_2u_3 \equiv 0 \bmod p. 
	\end{equation}
	As $p$ is odd, there exists a solution with $4a_1 \equiv n' \bmod p$
	and $u_2,u_3$ arbitrary modulo $p$. Geometrically, the equation
	\eqref{eqn:planes} defines the union of three planes which
	is non-singular away from the common points of intersection.
	Providing that $u_2u_3 \not \equiv 0 \bmod p$, we may therefore
	use Hensel's lemma to lift to a $p$-adic solution. Thus, we have
	shown that we may choose $p$-adic solutions such that $p \| u_1,
	p \nmid u_2u_3$ and both possibilities
	\[\left(\frac{-u_2/u_3}{p}\right) = 1,  \quad
	\left(\frac{-u_2/u_3}{p}\right) = -1\]
	may be realised. The result in this case
	therefore follows from Lemma \ref{lem:bad}.
	
	We now consider the general case. Let $n = p^bn'$ where $p \nmid n$
	and $b > 1$.
	We take a $p$-adic solution $\bu \in \sU_{pn'}(\ZZ_p)$ as constructed
	in the previous case, and consider the solution 
	$p^{b-1}\bu\in \sU_{n}(\ZZ_p)$. The quotients 
	$u_1/u_3,u_2/u_3$ are unchanged, hence the result
	follows from the previous case and \eqref{eqn:Hilbert}.
\end{proof}

\subsubsection{The prime $2$}


\begin{lemma} \label{lem:2_even}
Suppose that $n$ is even. Then the map	\[\mathcal{U}_n(\ZZ_2) \to \{\pm 1\}, \quad 
	\bu \mapsto \inv_2 \alpha(\bu)\]
	is surjective.
\end{lemma}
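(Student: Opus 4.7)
The plan is to exhibit two explicit $\ZZ_2$-points of $\sU_n$ on which $\inv_2 \alpha$ takes opposite values. This is in the same spirit as Lemma~\ref{lem:surj}, but the hypothesis $2 \mid n$ allows us to write solutions down directly, bypassing any appeal to Hensel's lemma.

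For the value $-1$, the identity $4/n = 2/n + 1/n + 1/n$ gives the natural number solution $\bu_1 = (n/2, n, n)$, which lies in $\sU_n(\NN) \subset \sU_n(\ZZ_2)$ since $n$ is even. By \eqref{eqn:Hilbert}, its local invariant at $2$ is $(-1/2, -1)_2$. For the value $+1$, the identity $4/n = 1/(2n) + 1/n + 5/(2n)$ gives $\bu_2 = (2n, n, 2n/5)$; since $5 \in \ZZ_2^{\ast}$ we have $\bu_2 \in \sU_n(\ZZ_2)$, and its local invariant at $2$ is $(-5, -5/2)_2$.

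It remains to evaluate these two Hilbert symbols. Using the standard formulae at $2$ \cite[Thm.~III.1]{Ser73} (bilinearity, together with $(-1,-1)_2 = -1$ and $(2,u)_2 = (-1)^{(u^2-1)/8}$ for odd $2$-adic units $u$), one obtains $(-1/2, -1)_2 = -1$ and $(-5, -5/2)_2 = +1$. Both evaluations are short and routine; the only non-obvious step in the argument is the choice of $\bu_2$, which one can guess by searching among rescalings of $U_1$-solutions $(v_1, v_2, v_3) \in U_1(\QQ_2)$ with $v_2(v_i) \geq -v_2(n)$ until one produces a Hilbert symbol equal to $+1$.
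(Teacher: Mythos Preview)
Your argument is correct. Both $\bu_1=(n/2,n,n)$ and $\bu_2=(2n,n,2n/5)$ are genuine points of $\sU_n(\ZZ_2)$ (the latter because $5\in\ZZ_2^\times$), and the Hilbert-symbol evaluations $(-1/2,-1)_2=-1$ and $(-5,-5/2)_2=1$ are straightforward from the standard formula.

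The paper's proof follows the same strategy of exhibiting two points, but organises it slightly differently: it first rescales to reduce to $n=2$, takes $(1,2,2)$ for the value $-1$, and for the value $+1$ it lifts the congruence $(u_1,u_2,u_3)\equiv(-1,2,2)\bmod 8$ via Hensel's lemma. Your choice of $\bu_2$ is a small but genuine improvement: by using an explicit rational solution that is already $2$-adically integral, you avoid the Hensel step entirely and treat all even $n$ at once. The trade-off is only that your $\bu_2$ looks a bit ad hoc, whereas the paper's choice $(-1,2,2)$ is visibly the ``sign-flipped'' companion of $(1,2,2)$.
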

\begin{proof}
	It suffices to prove the result for $n=2$, since then we can just obtain
	the result for all even $n$ by rescaling, as in the proof of Lemma 
	\ref{lem:surj}. Here our equation is
	\[2u_1u_2u_3 = u_1u_2 + u_2u_3 + u_3u_1.\]
	There is the natural number solution $(1,2,2)$ which is easily seen
	to have local invariant $-1$. Next, one verifies
	that the solution 
	\[ (u_1,u_2,u_3) \equiv (-1,2,2) \bmod 8\]
	lifts by Hensel's lemma to a $\ZZ_2$-point with
	local invariant $1$.
\end{proof}

Surprisingly, for odd $n$ the local invariant is always trivial at $2$.

\begin{lemma} \label{lem:2_odd}
Suppose that $n$ is odd.  Then $\inv_2 \alpha(\sU_n(\ZZ_2)) = 1$.
\end{lemma}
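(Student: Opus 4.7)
The plan is first to reduce, by continuity of the local invariant, to $\bu$ with $u_1u_2u_3 \neq 0$, and then to establish a reformulation of $\inv_2 \alpha$ that makes the $2$-adic triviality manifest when $n$ is odd.

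The key identity I would aim for is
\[
\inv_2 \alpha(\bu) = (1 - 4u_k/n,\ -u_iu_j)_2 \qquad \text{for any permutation } (i,j,k) \text{ of } (1,2,3).
\]
To derive it, set $X = -u_1/u_3$, $Y = -u_2/u_3$, and $t = 1 - 4u_3/n$. Dividing equation~\eqref{eqn:ES} through by $u_3$ and rearranging gives $1/X + 1/Y = t$, equivalently $X + Y = XYt$, whence $1 - Yt = -Y/X$. The Steinberg relation $(Yt,\, 1 - Yt)_2 = 1$, expanded by bilinearity of the Hilbert symbol, then forces $(X, Y)_2 = (t, -XY)_2$; since $-XY$ differs from $-u_1u_2$ by the square $u_3^2$, this equals $(1 - 4u_3/n,\, -u_1u_2)_2$. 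Proposition~\ref{prop:descend} yields the analogue for any permutation, because the underlying Brauer class is symmetric in the three coordinates.

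Next I would do a short $2$-adic valuation analysis. Reducing the defining equation of $\sU_n$ modulo $2$ and using that $n$ is odd yields $u_1u_2 + u_1u_3 + u_2u_3 \equiv 0 \pmod 2$; inspecting the residue patterns $(u_1, u_2, u_3) \bmod 2$ shows this forces at most one of the $u_i$ to be a $2$-adic unit. Hence we can choose an index $k$ with $v_2(u_k) \geq 1$. For such a $k$, $4u_k/n \in 8\ZZ_2$ since $n \in \ZZ_2^*$, and so $1 - 4u_k/n \in 1 + 8\ZZ_2 \subset (\QQ_2^*)^2$ by Hensel's lemma. The first slot of the Hilbert symbol is therefore a square, giving $(1 - 4u_k/n,\, -u_iu_j)_2 = 1$ and completing the proof.

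The main obstacle is the Steinberg-based derivation of the reformulation; once that identity is in place, the valuation analysis and the final appeal to $1 + 8\ZZ_2 \subset (\QQ_2^*)^2$ are routine. The initial continuity reduction is standard, since $\{u_1u_2u_3 \neq 0\}$ is open and dense in $\sU_n(\ZZ_2)$ and the local invariant extends continuously across the lines $L_{i,j}$ (cf.\ Remark~\ref{rem:smooth_points}).
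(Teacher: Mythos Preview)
Your argument is correct and is genuinely different from the paper's.  The paper proceeds by a direct case analysis: after normalising so that $s_1 = s_2 \ge s_3$ (in the notation $u_i = 2^{s_i}r_i$), it invokes the explicit formula for the $2$-adic Hilbert symbol from Serre and tracks the two pieces $\epsilon(-r_1/r_3)\epsilon(-r_2/r_3)$ and $(s_1-s_3)(\omega(r_1)+\omega(r_2))$ through several subcases, culminating in a small table for $s_1 - s_3 = 1$.  Your route avoids all of this by rewriting the class: the Steinberg manipulation shows $(-u_1/u_3,-u_2/u_3)_2 = (1 - 4u_k/n,\, -u_iu_j)_2$ for each permutation $(i,j,k)$, and then the mod~$2$ reduction of the defining equation forces at least two of the $u_i$ to be even, so one may pick $k$ with $v_2(u_k)\ge 1$ and conclude via $1 - 4u_k/n \in 1 + 8\ZZ_2 \subset (\QQ_2^\times)^2$.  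This is shorter, more conceptual, and makes the role of the hypothesis ``$n$ odd'' transparent (it is exactly what puts $4u_k/n$ into $8\ZZ_2$).  The only small points to make explicit are that $Yt \neq 0,1$ so the Steinberg relation applies (this holds since $u_2 \neq 0$ and $t = 1 - 4u_3/n \in 1 + 4\ZZ_2$ is a unit), and that the symmetry you invoke is precisely the content of Proposition~\ref{prop:descend}.
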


\begin{proof}
By continuity it is enough to prove
\[
(-u_1/u_3, -u_2/u_3)_2 = 1
\]
when $u_1u_2u_3 \neq 0$.
Write $\bu = (2^{s_1} r_1, 2^{s_2} r_2, 2^{s_3} r_3)$ with $s_1,s_2,s_3 \ge 0$ and $r_1,r_2,r_3 \in \ZZ_2^\times$. Without loss of generality, we may assume that $s_1 \geq s_2 \geq s_3$.
Looking at valuations in the equation
\[
n(2^{s_1+s_2} r_1 r_2 + 2^{s_1+s_3} r_1 r_3 + 2^{s_2+s_3} r_2 r_3) = 2^{s_1+s_2+s_3+2} r_1 r_2 r_3
\]
shows that $s_1 = s_2$.  Taking out a factor of $2^{s_1+s_3}$ gives
\[
n(2^{s_1-s_3} r_1 r_2 + r_1 r_3 + r_2 r_3) = 2^{s_1+2} r_1 r_2 r_3.
\]
Looking modulo $2$ shows $s_1-s_3 \ge 1$.  We therefore have
\begin{equation}\label{eq:2}
2^{s_1-s_3} r_1 r_2 + (r_1 + r_2) r_3 \equiv 0 \bmod 8.
\end{equation}
Using the formula of~\cite[Thm.~III.1]{Ser73}, the Hilbert symbol above is given by
\[
(-1)^{\epsilon(-r_1/r_3)\epsilon(-r_2/r_3) + (s_1-s_3)(\omega(-r_2/r_3)+\omega(-r_1/r_3))},
\]
where $\epsilon(x) = (x-1)/2$ and $\omega(x) = (x^2-1)/8$.
  Note that $\omega$ is an even function.  We define
\begin{align*}
f(\bu) & = 
\epsilon(-r_1/r_3)\epsilon(-r_2/r_3) = \begin{cases} 1 \bmod 2 & \text{if $r_1 \equiv r_2 \equiv r_3 \bmod 4$} \\
0 \bmod 2 & \text{otherwise}
\end{cases}  \\
g(\bu) & = (s_1-s_3)(\omega(-r_2/r_3)+\omega(-r_1/r_3)) \equiv (s_1-s_3)(\omega(r_1) + \omega(r_2)) \bmod 2.
\end{align*}

If $s_1-s_3 \ge 3$, then~\eqref{eq:2} gives $r_1+r_2 \equiv 0 \bmod 8$, and so $f(\bu)=g(\bu)=0$.  If $s_1-s_3 = 2$, then~\eqref{eq:2} gives $r_1+r_2 \equiv 0 \bmod 4$, and so $f(\bu)=0$; and $g(\bu)=0$ because $s_1-s_3$ is even.

The remaining case is $s_1-s_3=1$.  In this case, \eqref{eq:2} gives $r_1 \equiv r_2 \bmod 4$, which implies
\[
(r_1+r_2)r_3 \equiv -2r_1 r_2 \equiv 6 \bmod 8
\]
and therefore
\[
r_3 \equiv \begin{cases}
1 \bmod{4} & \text{if $r_1+r_2 \equiv 6 \bmod 8$;} \\
3 \bmod{4} & \text{if $r_1+r_2 \equiv 2 \bmod 8$.}
\end{cases}
\]
Now looking at the possible values for $\{r_1,r_2\} \bmod 8$ gives the following.
\begin{center}
\begin{tabular}{cc|ccc}
$r_1 \bmod{8}$ & $r_2 \bmod{8}$ & $r_3 \bmod{4}$ & $f(\bu)$ & $g(\bu)$ \\
\hline
1 & 1 & 3 & 0 & 0 \\
1 & 5 & 1 & 1 & 1 \\
5 & 5 & 3 & 0 & 0 \\
3 & 3 & 1 & 0 & 0 \\
3 & 7 & 3 & 1 & 1 \\
7 & 7 & 1 & 0 & 0
\end{tabular}
\end{center}
Thus, in all cases, $f(\bu) + g(\bu) = 0$, completing the proof.
\end{proof}

\subsection{Proof of Theorem \ref{thm:Hilbert}}
Hilbert's reciprocity law \cite[Thm.~III.3]{Ser73} gives
\[\prod_{p \leq \infty} (-u_1/u_3, -u_2/u_3)_p = 1.\]
For a natural number solution the local invariant at $\infty$ is $-1$ by Lemma~\ref{lem:real}. Moreover, the local invariant at $p \nmid n$ is $1$ by Lemmas \ref{lem:good} and \ref{lem:2_odd}. \qed

\subsection{Proof of Theorem \ref{thm:Hilbert2}}
Similar to the proof of Theorem \ref{thm:Hilbert}, but if one of the $u_i$ is negative then the local invariant at $\infty$ is $1$, by Lemma \ref{lem:real}. \qed

\subsection{Proof of Corollary \ref{cor:Yamamoto}}
The first part of the statement follows from Lemma \ref{lem:p-adic}. For the second part, without
loss of generality we assume that $u_2/u_3 \in \ZZ_p^*$. Then by Theorem \ref{thm:Hilbert} and Lemma \ref{lem:bad}, we deduce that 
\[\left(\frac{-u_2/u_3}{p}\right)^{v_p(u_1u_3)} = -1,\]
whence the Legendre symbol must be $-1$, as required. \qed

\subsection{Proof of Corollary \ref{cor:square}}

By Theorem \ref{thm:Hilbert}, to prove Corollary \ref{cor:square} it suffices to show the following purely local statement (applied to each $p \mid n$).

\begin{lemma}
	Let $p$ be an odd prime and $n=p^{2m}n'$, where $n' \in \ZZ_p^*$
	and $m \geq 0$.
	Let $\bu \in \sU_n(\ZZ_p)$ be such that 
	\[p^{2m} \mid u_1, \ p \nmid u_2u_3, \quad \text{ or }
	\quad p \nmid u_1, \ p^{2m} \mid u_2, \ p^{2m} \mid u_3.\]
	Then $(-u_1/u_3, -u_2/u_3)_p = 1 $.
\end{lemma}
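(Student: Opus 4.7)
The plan is to rewrite $(-u_1/u_3, -u_2/u_3)_p$ so that either both arguments are $p$-adic units, or one argument lies in $1+p\ZZ_p$; in either case the symbol is trivial because $p$ is odd. The factors $p^{2m}=(p^m)^2$ appearing in the hypotheses are squares in $\QQ_p^*$ and so disappear from any Hilbert symbol. The case $m = 0$ is immediate from Lemma~\ref{lem:good} since then $p\nmid n$, so I assume $m \geq 1$. I would also reduce to $u_1 u_2 u_3 \neq 0$ by continuity, as in earlier lemmas.

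For the first hypothesis ($p^{2m}\mid u_1$, $p\nmid u_2u_3$), I would write $u_1 = p^{2m}a_1$ with $a_1\in\ZZ_p$, substitute into the defining equation, and divide by $p^{2m}$. Reducing modulo $p$ kills the term $p^{2m}a_1(u_2+u_3)$ (as $m \geq 1$), leaving $4a_1 u_2 u_3 \equiv n' u_2 u_3 \pmod{p}$, which forces $a_1 \in \ZZ_p^*$. Since $p^{2m}$ is a square, $(-u_1/u_3, -u_2/u_3)_p = (-a_1/u_3, -u_2/u_3)_p$ is a Hilbert symbol of two $p$-adic units, hence equals $1$.

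The second hypothesis ($p\nmid u_1$, $p^{2m}\mid u_2$, $p^{2m}\mid u_3$) is more delicate. Writing $u_i = p^{2m}a_i$ for $i=2,3$ and dividing by $p^{4m}$ gives $4u_1 a_2 a_3 = n'(u_1(a_2+a_3) + p^{2m}a_2 a_3)$. Setting $s_i = v_p(a_i)$ with $s_2 \leq s_3$ and writing $a_i = p^{s_i} b_i$ with $b_i \in \ZZ_p^*$, I would divide by $p^{s_2}$ and reduce modulo $p$: if $s_3 > s_2$ this yields the impossible $n' u_1 b_2 \equiv 0 \pmod{p}$. So either $s_2 = s_3 = 0$, in which case $a_2, a_3$ are units and one finishes as in the first case, or $s_2 = s_3 \geq 1$, in which case the same reduction forces $b_2 + b_3 \equiv 0 \pmod{p}$, hence $-u_2/u_3 = -b_2/b_3 \equiv 1 \pmod{p}$; for odd $p$ this element is then a square in $\ZZ_p^*$ by Hensel's lemma, killing the Hilbert symbol. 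The main obstacle is precisely this last subcase, where the equation does not immediately force $a_2, a_3$ to be units, but does force $-u_2/u_3 \equiv 1 \pmod p$.
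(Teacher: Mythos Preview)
Your proof is correct and follows essentially the same route as the paper's: in both type~1 and type~2 you show (as does the paper) that either $v_p(u_1)$, respectively $v_p(u_2)=v_p(u_3)$, equals $2m$ (so the odd-$p$ Hilbert symbol is killed by evenness), or in the remaining type~2 subcase that $-u_2/u_3\equiv 1\pmod p$. The only cosmetic difference is that for this last subcase the paper invokes the identity \eqref{eqn:awesome} to obtain $-u_2/u_3\equiv 1\pmod p$, whereas you extract the same congruence $b_2+b_3\equiv 0\pmod p$ directly from the defining equation; and the paper phrases the evenness step via Lemma~\ref{lem:bad} rather than by stripping the square $p^{2m}$ from the symbol.
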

\begin{proof}
	We first consider  type 1 solutions. 
	Here the  Hilbert symbol is
	\[
	\left(\frac{-u_2/u_3}{p}\right)^{v_p(u_1)}.
	\]
	However it follows easily from the equation \eqref{def:U_n}
	that $p \nmid (u_1/n)$, so that $v_p(u_1)$ is even and the result
	follows.

	Now consider type 2 solutions. The equation \eqref{def:U_n}
	implies that $v_p(u_2) = v_p(u_3)$, so the Hilbert symbol
	is
	\[
	\left(\frac{-u_2/u_3}{p}\right)^{v_p(u_3)}.
	\]
	If $p \nmid (u_3/n)$, then $v_p(u_3)$ is even by assumption, and the result
	follows. Otherwise, suppose that $p \mid (u_3/n)$.
	From \eqref{eqn:awesome} we have
	\[-\frac{u_2}{u_3} = \frac{1}{1 + u_3/u_1 - 4u_3/n}
	\equiv 1 \bmod p\]
	since $u_3/u_1 \equiv 4u_3/n \equiv 0 \bmod p$. The
	result follows.
\end{proof}

\subsection{Proof of Theorem \ref{thm:BM}}
First note that as $\sU_n(\ZZ) \neq \emptyset$ and $n >0$, we have 
\begin{equation} \label{eqn:ZZ}
	U_n(\RR)_{+} \times \prod_p \sU_n(\ZZ_p) \neq \emptyset.
\end{equation}
It follows from Lemmas \ref{lem:surj} and \ref{lem:2_even} that there is some prime $p \mid n$ for which the local invariant is surjective on $\sU_n(\ZZ_p)$. Therefore, there are elements of \eqref{eqn:ZZ} whose product of local invariants is $-1$ and $1$, respectively.  \qed

\subsection{Proof of Theorem \ref{thm:Wei}}

The set of real points $U_n(\RR)$ is non-compact. Still, it follows easily from  the equation \eqref{eqn:ES} that 
\begin{equation} \label{eqn:min}
	\min_{\bu \in U_n(\RR)}|u_i| \leq 3n/4.
\end{equation}
These real conditions impose strong arithmetic conditions. (In the terminology of \cite[\S2]{JS17} our surface is ``not weakly obstructed'' but is ``strongly obstructed'' at infinity.) This observation gives the following.

\begin{lemma} \label{lem:Zariski}
	The set
	\[ \{\bu \in \sU_n(\ZZ) : u_1u_2u_3 \neq 0, u_i \neq -u_j \mbox{ for all } i,j \in \{1,2,3\}\} \]
	is finite. In particular, $\sU_n(\ZZ)$ is not Zariski dense and 
	$\sU_n(\NN)$ is finite.
\end{lemma}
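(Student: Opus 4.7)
The plan is to bound the three coordinates of $\bu$ one at a time, using the real inequality \eqref{eqn:min} to start the cascade and then the equation \eqref{eqn:ES} itself to propagate the bounds. After permuting the coordinates (allowed since the set in the statement is symmetric under the $S_3$-action), I assume that $|u_1| \leq |u_2| \leq |u_3|$. The bound \eqref{eqn:min} gives $|u_1| \leq 3n/4$ directly, leaving only finitely many possibilities for $u_1$.

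For each fixed $u_1$, I rewrite \eqref{eqn:ES} as
\[ \tfrac{1}{u_2} + \tfrac{1}{u_3} = c, \qquad c := \tfrac{4}{n} - \tfrac{1}{u_1}. \]
The key observation is that $c \neq 0$ for any $\bu$ in our set: $c = 0$ would force $u_1 = n/4$ and $u_2 = -u_3$, which is excluded by hypothesis. The triangle inequality combined with $|u_3| \geq |u_2|$ then gives $|c| \leq 2/|u_2|$, so $|u_2| \leq 2/|c|$, leaving only finitely many $u_2$. Finally, for each fixed pair $(u_1, u_2)$ the third coordinate is uniquely determined by $1/u_3 = c - 1/u_2$ when that expression is nonzero (and no integer solution exists if it is zero). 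Assembling these three finiteness steps gives the finiteness of the set.

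The two remaining assertions follow without difficulty. For $\sU_n(\NN)$, a natural number solution automatically satisfies $u_i > 0$ and $u_i + u_j > 0$, so it belongs to the finite set just constructed. For the Zariski density claim, I would argue that the integer points \emph{excluded} from that set are trapped on low-dimensional loci: substituting $u_i = 0$ into \eqref{def:U_n} forces $u_j u_k = 0$, so such points lie on one of the three coordinate lines $\{u_i = u_j = 0\}$; similarly, substituting $u_i = -u_j$ (with $u_i \neq 0$) into \eqref{def:U_n} forces $u_k = n/4$, placing these solutions on at most three further lines (and only when $4 \mid n$). Hence $\sU_n(\ZZ)$ is contained in the union of these finitely many lines with a finite set of points, a proper Zariski closed subset of the two-dimensional surface $U_n$.

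I do not anticipate a real obstacle: the argument is a straightforward three-stage sieve, and the only step requiring care is the verification that the ``bad'' integer solutions (those where the above bounds would degenerate) are genuinely confined to finitely many one-dimensional subvarieties of $U_n$, which is a direct substitution into \eqref{def:U_n}.
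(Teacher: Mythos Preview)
Your proposal is correct and follows essentially the same cascade argument as the paper: order the coordinates by absolute value, bound $u_1$ via \eqref{eqn:min}, rule out $c=0$ using the hypothesis $u_i \neq -u_j$, and then bound $u_2$ (and hence $u_3$) from the nonzero value of $c$. You have in fact supplied more detail than the paper does, in particular the explicit verification that the excluded integer points lie on finitely many lines of $U_n$, which the paper leaves implicit.
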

\begin{proof}
	Without loss of generality, we have $|u_1|\leq |u_2| \leq |u_3|$.
	Then by \eqref{eqn:min} we have
	$|u_1| \leq 3n/4,$
	so there are only finitely many choices for $u_1$.
	If $4/n = 1/u_1$, then we obtain the solution $u_2 = -u_3$,
	which is being excluded. Hence we have
	\[
	\frac{4}{n} - \frac{1}{u_1}  = \frac{1}{u_2} + \frac{1}{u_3}
	\]
	and the left hand size is non-zero and takes only finitely many
	values. But then as in \eqref{eqn:min}, one finds that 
	$u_2$ and $u_3$ take only finitely many values, as required.
\end{proof}

\begin{lemma} \label{lem:LW}
	For all but finitely many primes $p$, the map
	$\sU_n(\ZZ) \to \sU_n(\FF_p)$
	is not surjective.
\end{lemma}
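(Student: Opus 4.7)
The plan is to exploit Lemma~\ref{lem:Zariski}, which tells us that $\sU_n(\ZZ)$ fails to be Zariski dense, and compare this with a lower bound on $|\sU_n(\FF_p)|$ coming from the Lang--Weil estimates.

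First I would let $Z \subset \sU_n$ be the scheme-theoretic closure of $\sU_n(\ZZ)$ (over $\ZZ$). By Lemma~\ref{lem:Zariski}, the generic fibre $Z_\QQ$ is a proper closed subscheme of $U_n$: indeed, all but finitely many integral points lie on the six lines obtained from $u_iu_j = 0$ or (when $4\mid n$) from $u_i = -u_j$, $u_k = n/4$. Hence $\dim Z_\QQ \leq 1$. In particular, for all but finitely many primes $p$, the reduction $Z_{\FF_p}$ has dimension at most $1$ as well. The image of $\sU_n(\ZZ)$ in $\sU_n(\FF_p)$ is visibly contained in $Z(\FF_p) \subset Z_{\FF_p}(\FF_p)$.

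Next I would bound each side. The elementary fact that a curve of bounded degree over $\FF_p$ has $O(p)$ points (or Lang--Weil applied to the finitely many irreducible components of $Z_{\bar\QQ}$, all of dimension $\leq 1$) gives $|Z_{\FF_p}(\FF_p)| = O(p)$ as $p \to \infty$, with an implied constant depending only on $n$. On the other hand, $U_n$ is geometrically integral over $\QQ$ (it is a geometrically integral open subset of the Cayley cubic $S_n$ by \S\ref{sec:Cayley}), so for all but finitely many $p$ the fibre $\sU_{n,\FF_p}$ is a geometrically integral surface over $\FF_p$. The Lang--Weil bound then yields
\[
|\sU_n(\FF_p)| = p^2 + O(p^{3/2}),
\]
with implied constant depending only on $n$.

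Combining the two estimates, for all sufficiently large $p$ we have $|Z_{\FF_p}(\FF_p)| < |\sU_n(\FF_p)|$, and hence the reduction map $\sU_n(\ZZ) \to \sU_n(\FF_p)$ cannot be surjective. There are no real obstacles in this argument; the only mild care needed is checking that $\sU_n$ is geometrically integral (so Lang--Weil applies) and that the closure $Z$ really does have dimension $\leq 1$ in the generic fibre, both of which follow directly from what has already been established.
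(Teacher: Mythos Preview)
Your argument is correct and is precisely the approach the paper takes: it invokes Lemma~\ref{lem:Zariski} together with the Lang--Weil estimates, and your write-up simply spells out the details of that one-line proof.
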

\begin{proof}
	Follows from Lemma \ref{lem:Zariski} 
	and the Lang--Weil estimates \cite{LW54}
\end{proof}

We now complete the proof of Theorem~\ref{thm:Wei}.
If the map $U_n(\QQ) \to \sU_n(\AA_\QQ)^{\Br}_\bullet$ had dense image then, as $\Br U_n / \Br \QQ$ is finite (Theorem \ref{thm:Brauer}), it would follow from \cite[Lem.~6.5]{CTWX} (applied to $\widetilde{U}_n$) that the map $\sU_n(\ZZ) \to \sU_n(\ZZ_p)$ has dense image for all finitely many primes $p$; however this clearly contradicts Lemma~\ref{lem:LW}, and shows Theorem~\ref{thm:Wei}. \qed

\begin{remark}
Let $X$ be a smooth variety over $\QQ$ which contains a dense torus $T$ with $\HH^0(X_{\bar{\QQ}}, \Gm) = \bar{\QQ}^*$ and $\Pic X_{\bar{\QQ}}$ torsion free. If the action of $T$ on itself extends to $X$, i.e.~$X$ is a toric variety, then in \cite{CX18, Wei14} it is shown that the Brauer--Manin obstruction is the only one to strong approximation away from $\infty$. However this result need not hold if the action of $T$ does not extend to the whole variety.  Here $\widetilde{U}_n$ contains $\Gm^2$ but does not satisfy this result by Theorem~\ref{thm:Wei}.
\end{remark}

\appendix
\section{Comparison with previous results}\label{sec:yamamoto}
In \cite{Yam65} (see also \cite[p.~290]{Mor69}), Yamamoto shows numerous quadratic reciprocity requirements for solutions to~\eqref{eqn:ES} when $n=p$ is prime, with various hypotheses. In this appendix we explain how these are all special cases of Corollary \ref{cor:Yamamoto}.

There are two types of solutions to \eqref{eqn:ES} (see \cite[Ch.~30]{Mor69} and  \cite[Prop.~2.11]{ET13}). Type 1 is when $p$ exactly divides one of the $u_i$ to valuation $1$, and Type 2 is when $p$ divides exactly two of the $u_i$ to valuation $1$.

We first deal with Type $2$ solutions.
Let $\bu \in \sU_p(\NN)$ and suppose that $p \nmid u_1$, $p \| u_2$, $p \| u_3$.  Then one can write (see \cite[p.~289]{Mor69})
\[
(u_1,p^{-1}u_2,p^{-1}u_3) = (bcd,abd,acd)
\]
with $a,b,c,d$ positive integers satisfying $(a,b)=(b,c)=(c,a)=1$, $p \nmid bcd$ and
\[
pa+b+c=4abcd.
\]
Yamamoto~\cite[Lem.~2]{Yam65} defines $q=4abd-1$ and then shows~\cite[Lem.~4]{Yam65} that the Kronecker symbol
\[
\left( \frac{p}{q} \right) = -1.
\]
This follows from Corollary~\ref{cor:Yamamoto}.  Indeed, using $4abd \equiv b/c \bmod p$ we have
\begin{multline*}
\left( \frac{p}{4abd-1} \right)
= \left( \frac{-1}{p} \right) \left( \frac{4abd-1}{p} \right)
= \left( \frac{-b/c}{p} \right)
= \left( \frac{-u_2/u_3}{p} \right) = -1.
\end{multline*}

For Type $1$ solutions, let $\bu \in \sU_p(\NN)$ with $p \mid u_1$, $p \nmid u_2$, $p \nmid u_3$. Write
\[
(p^{-1}u_1,u_2,u_3) = (bcd,acd,abd)
\]
with $a,b,c,d$ positive integers satisfying $(a,b)=(b,c)=(c,d)=1$ and $p \nmid abcd$ (again see \cite[p.~289]{Mor69}).
Then we have
\[
a+bp+cp = 4abcd.
\]
In \cite[Lem.~2]{Yam65} Yamamoto defines $q = 4abd-p$ , assumes $p \equiv 1 \bmod 4$ (see the proof of  \cite[Lem.~4]{Yam65}) and shows that the Kronecker symbol
\[
\left( \frac{p}{4abq} \right) = -1.
\]
By Corollary~\ref{cor:Yamamoto} we deduce this as follows. As $q \equiv 4abd \equiv a/c \bmod p$ we have
\[
\left( \frac{p}{4abq} \right) = \left( \frac{4abq}{p} \right) = \left( \frac{b/c}{p} \right) = \left( \frac{u_3/u_2}{p} \right) = \left( \frac{-u_2/u_3}{p} \right) = -1.\]

Yamamoto also proves two further conditions~\cite[Lem.~3, Lem.~4]{Yam65} in the case $p \equiv 1 \bmod 4$ which, in either the Type $1$ or Type $2$ case, reduce to
\[
\left( \frac{p}{4bc} \right) = -1
\]
where $b,c$ are as defined above for Type $1$ or Type $2$ solutions, respectively.
These also follow from Corollary~\ref{cor:Yamamoto}, as follows:
\[
\left( \frac{p}{4bc} \right) = \left( \frac{4bc}{p} \right) = \left( \frac{b/c}{p} \right) = \left( \frac{u_2/ u_3}{p} \right) = \left( \frac{-u_2/u_3}{p} \right) = -1. 
\]

\bibliographystyle{amsalpha}{}

\begin{thebibliography}{xx}
\bibitem{artin}
M. Artin, On isolated rational singularities of surfaces,
\emph{Amer. J. Math.} \textbf{88} (1966), no.~1, 129--136.

\bibitem{Bri13}
M. Bright, Brauer groups of singular del Pezzo surfaces,
\emph{Michigan Math. J.},
\textbf{62}(3) (2013), 657--664.

\bibitem{CX18}
Y. Cao, F. Xu,
Strong approximation with Brauer-Manin obstruction for toric varieties,
\emph{Ann. Inst. Fourier}, {\bf68}(5) (2018), 1879--1908.

\bibitem{CTS20}
J.-L. Colliot-Th\'el\`ene, A. Skorobogatov, 
\emph{The Brauer--Grothendieck group}, \url{http://wwwf.imperial.ac.uk/~anskor/brauer.pdf}.

\bibitem{CTW12}
J.-L. Colliot-Th\'el\`ene, O.~Wittenberg, Groupe de {B}rauer et points
  entiers de deux familles de surfaces cubiques affines, \emph{Amer. J. Math.}
  \textbf{134} (2012), no.~5, 1303--1327.

\bibitem{CTWX}
J.-L. Colliot-Th\'el\`ene, D.~Wei, F.~Xu, Brauer--Manin obstruction
  for Markoff surfaces, \emph{Annali della Scuola Normale di Pisa}, to appear.

  
\bibitem{CTX13}
J.-L. Colliot-Th\'el\`ene, F.~Xu, Strong approximation for the total space of certain quadric fibrations, \emph{Acta Arith.} \textbf{157} (2013), 169--199.

\bibitem{ET13}
  C. Elsholtz, T. Tao, Counting the number of solutions to the Erd\H{o}s--Straus equation on unit fractions, \emph{J. Aust. Math. Soc.}
  \textbf{94}  (2013), no. 1, 50--105.

\bibitem{grot}
A. Grothendieck, Le groupe de Brauer, III.
In \emph{Dix Expos\'es sur la Cohomologie des Sch\'emas}, North-Holland, 1968.

\bibitem{GS}
P. Gille, T. Szamuely, \emph{Central Simple Algebras and Galois Cohomology},
Cambridge studies in advanced mathematics vol.~101,
Cambridge University Press, Cambridge, 2006.


\bibitem{Har17}
Y. Harpaz, Geometry and arithmetic of certain log K3 surfaces. 
\emph{Ann. Inst. Fourier} {\bf 67} (2017), no. 5, 2167--2200.

\bibitem{Har19}
Y. Harpaz, Integral points on conic log K3 surfaces. 
\emph{J. Eur. Math. Soc.} {\bf 21} (2019), no. 3, 627--664.

\bibitem{JS17}
J. Jahnel, D. Schindler,
On integral points on degree four del Pezzo surfaces,
\emph{Israel J. Math.} {\bf 222} (2017), no. 1, 21--62.

\bibitem{Jar04}
J. H. Jaroma, On expanding $4/n$ into three Egyptian fractions,
\emph{Crux Mathematicorum,} {\bf 30}(1) (2014), 36--37.

\bibitem{LW54} {S.~Lang, A.~Weil},
{Number of points of varieties in finite fields}.
{\em Amer. J. Math.} {\bf76} (1954), 819--827.

\bibitem{lipman}
J. Lipman, Rational singularities, with applications to algebraic surfaces and unique factorization. 
\emph{Inst. Hautes \'{E}tudes Sci. Publ. Math.}
{\bf36}, (1969), 195--279.

\bibitem{LM19}
  D. Loughran, V. Mitankin,
  Integral Hasse principle and strong approximation for Markoff surfaces, 
  \emph{Int. Math. Res. Not.}, to appear.

\bibitem{milne}
J. S. Milne, \emph{\'Etale cohomology.}
Princeton University Press, Princeton, New Jersey, 1980.

\bibitem{Mor69}
	L. J. Mordell, \emph{Diophantine equations.}
	Pure and Applied Mathematics, Vol. 30 Academic Press, London-New York 1969.
	
	
\bibitem{Poo17}
B.~Poonen, \emph{Rational points on varieties}, Graduate Studies in
  Mathematics, vol. 186, American Mathematical Society, Providence, RI, 2017.
  
\bibitem{Ser73}
J.-P.~Serre, \emph{A course in arithmetic}, Graduate Texts in Mathematics, No. 7. Springer-Verlag, 1973.

\bibitem{Sie56}
W. Sierpi\'{n}ski,
  Sur les d\'ecompositions de nombres rationnels en fractions primaires, \emph{Mathesis} {\bf65} (1956), 16--32,.
	
\bibitem{Wei14}
D. Wei, Strong approximation for a toric variety,
\texttt{arXiv:1403.1035}.
  
\bibitem{Yam65}
  K. Yamamoto, 
  On the Diophantine equation $\frac{4}{n}=\frac{1}{x}+\frac{1}{y}+\frac{1}{z}$, \emph{Mem. Fac. Sci. Kyushu Univ. Ser. A}
  \textbf{19} (1965), 37--47.
  
\end{thebibliography}

\end{document}